\tikzstyle{vertex}=[circle, draw, inner sep=0pt, minimum size=2.7pt]
\newcommand{\vertex}{\node[vertex]}
\begin{document}
\frontmatter          
\pagestyle{headings}  

\mainmatter              

\title{Characterization of Double-Arborescences and their Minimum-Word-Representants}

\titlerunning{Title}  
%

\author{Tithi Dwary \and 
K. V. Krishna} 

\authorrunning{Tithi Dwary \and K. V. Krishna} 

\institute{Indian Institute of Technology Guwahati\\
	\email{tithi.dwary@iitg.ac.in};\;\;\; 
	\email{kvk@iitg.ac.in}}

\maketitle              

\begin{abstract}
A double-arborescence is a treelike comparability graph with an all-adjacent vertex. In this paper, we first give a forbidden induced subgraph characterization of double-arborescences, where we prove that double-arborescences are precisely $P_4$-free treelike comparability graphs. Then, we characterize a more general class consisting of $P_4$-free distance-hereditary graphs using split-decomposition trees. Consequently, using split-decomposition trees, we characterize double-arborescences and one of its subclasses, viz., arborescences; a double-arborescence is an arborescence if its all-adjacent vertex is a source or a sink. In the context of word-representable graphs, it is an open problem to find the classes of word-representable graphs whose minimum-word-representants are of length $2n - k$, where $n$ is the number of vertices of the graph and $k$ is its clique number. Contributing to the open problem, we devise an algorithmic procedure and show that the class of double-arborescences is one such class. It seems the class of double-arborescences is the first example satisfying the criteria given in the open problem, for an arbitrary $k$.
\end{abstract}

\keywords{Distance-hereditary graphs; treelike comparability graphs, split decomposition;  arborescences; minimum-word-representants.}

\section{Introduction }

This work aims at characterizing a special class of comparability graphs, viz., double-arborescences, and also finding their minimum-word-representants. The graphs in this work are simple and connected. In this section and Section \ref{split-dec-trees}, we present the requisite background material and fix the notation. 

A graph $G = (V, E)$ is called a comparability graph if it admits a transitive orientation, i.e., an assignment of direction to the edges of $G$ such that $\overrightarrow{ab} \in E$ and $\overrightarrow{bc} \in E$, then $\overrightarrow{ac} \in E$. Based on a given transitive orientation, every comparability graph $G$ induces a partially ordered set (in short, poset), denoted by $P_G$.  If $G$ admits a transitive orientation such that the Hasse diagram of the poset $P_G$ is a tree, then $G$ is called a treelike comparability graph and the corresponding orientation is called a treelike orientation. It was shown in \cite{cornelsen2009treelike} that a treelike orientation of a comparability graph, if exists, is unique up to isomorphism and reversing the whole orientation. A treelike comparability graph $G$ with an all-adjacent vertex is called a double-arborescence, i.e., there is a vertex $r$ such that $V = \{r\} \cup N_G(r)$, where $N_G(r)$ is the neighborhood of $r$ in $G$. We consider $r$ as the root of $G$. In addition, under the treelike orientation, if the root $r$ is a source (or a sink), i.e., the indegree (respectively, outdegree) of $r$ is zero, then $G$ is called an arborescence. The treelike orientation of a (double-)arborescence is called the (double-)arborescence orientation. Note that every arborescence is a double-arborescence but not conversely. We call a double-arborescence as a strict-double-arborescence if it is not an arborescence. A treelike comparability graph $G$ is called a path of $k$ double-arborescences (or simply, a path of double-arborescences) if $G$ (precisely) consists of $k$ number of double-arborescences and a path connecting their roots. The smallest possible $k$ such that $G$ is a path of $k$ double-arborescences can be determined in linear time \cite{cornelsen2009treelike}. These graphs are well depicted and understood by their Hasse diagrams or the digraphs of transitive reductions\footnote{The transitive reduction of a transitive orientation is obtained by deleting the transitive edges.}.  For example, refer Fig. \ref{fig_4} for these types of graphs given by their transitive reductions, in which (b) is a strict-double-arborescence. For details on the arborescences and related graphs, one may refer to \cite{golumbic2022they} and the references thereof.

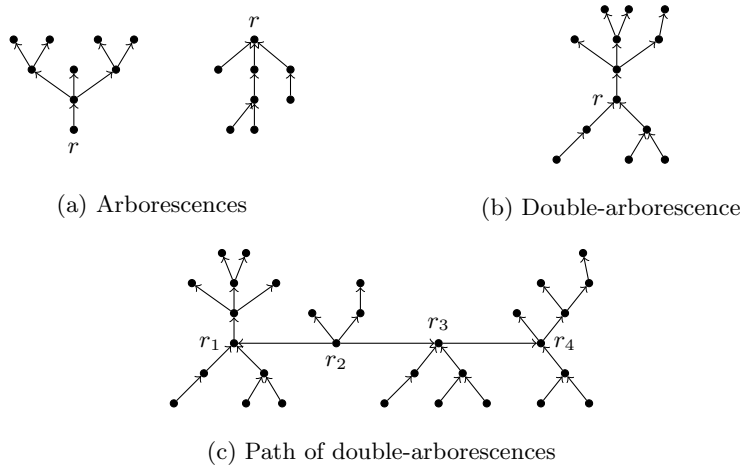
\begin{figure}[t]
	\centering
	\begin{minipage}{.5\textwidth}
		\centering
		\[\begin{tikzpicture}[scale=0.8]
			\vertex (a_1) at (0,0) [fill=black] {};
			\vertex (a_2) at (-0.7,0.5) [fill=black] {};
			\vertex (a_3) at (0.7,0.5) [fill=black] {};
			
			\vertex (a_4) at (-1,1) [fill=black] {};
			\vertex (a_5) at (-0.4,1) [fill=black] {};
			\vertex (a_6) at (0.4,1) [fill=black] {};
			\vertex (a_7) at (1,1) [fill=black] {};
			\vertex (a_8) at (0,-0.5) [fill=black,label=below:$r$] {};
			\vertex (a_9) at (0,0.5) [fill=black] {};
			
			\path[->]
			(a_8) edge (a_1)
			(a_1) edge (a_2)
			(a_1) edge (a_3)
			(a_2) edge (a_4)
			(a_2) edge (a_5)
			(a_3) edge (a_6)
			(a_3) edge (a_7)
			(a_1) edge (a_9);

			\vertex (a'_1) at (3,1) [fill=black,label=above:$r$] {};
			\vertex (a'_2) at (2.4,0.5) [fill=black] {};
			\vertex (a'_3) at (3.6,0.5) [fill=black] {};
			\vertex (a'_4) at (3,0.5) [fill=black] {};
			\vertex (a'_5) at (3,0) [fill=black] {};
			\vertex (a'_6) at (3.6,0) [fill=black] {};
			\vertex (a'_7) at (3,-0.5) [fill=black] {};
			\vertex (a'_8) at (2.6,-0.5) [fill=black] {};

			\path[<-]
			
			(a'_1) edge (a'_2)
			(a'_1) edge (a'_3)
			(a'_1) edge (a'_4)
			(a'_4) edge (a'_5)
			(a'_3) edge (a'_6)
			(a'_5) edge (a'_7)
			(a'_5) edge (a'_8);

		\end{tikzpicture}\] 
		(a) Arborescences
	\end{minipage}%
	\begin{minipage}{.5\textwidth}
		\centering
		
		\[\begin{tikzpicture}[scale=0.8]
			
			\vertex (a_1) at (0,1) [fill=black] {};
			\vertex (a_2) at (-0.7,1.5) [fill=black] {};
			\vertex (a_3) at (0.7,1.5) [fill=black] {};
			
			\vertex (a_6) at (0.8,2) [fill=black] {};
			
			\vertex (a_8) at (0,0.5) [fill=black,label=left:$r$] {};
			\vertex (a_9) at (0,1.5) [fill=black] {};
			\vertex (a_{10}) at (-0.2,2) [fill=black] {};
			\vertex (a_{11}) at (0.2,2) [fill=black] {};
			\vertex (a'_2) at (-0.5,0) [fill=black] {};
			\vertex (a'_3) at (0.5,0) [fill=black] {};
			\vertex (a'_5) at (-1,-0.5) [fill=black] {};
			\vertex (a'_6) at (0.2,-0.5) [fill=black] {};
			\vertex (a'_7) at (0.8,-0.5) [fill=black] {};
			
			\path[->]
			(a_8) edge (a_1)
			(a_1) edge (a_2)
			(a_1) edge (a_3)
			
			(a_1) edge (a_9)
			(a_9) edge (a_{10})
			(a_9) edge (a_{11})
			(a_3) edge (a_6);
			
			\path[->]
			
			(a'_2) edge (a_8)
			(a'_3) edge (a_8)
			
			(a'_5) edge (a'_2)
			(a'_6) edge (a'_3)
			(a'_7) edge (a'_3);

		\end{tikzpicture}\]	
		(b) Double-arborescence
	\end{minipage}%
	
	\begin{minipage}{.5\textwidth}
		\centering
		
		\[\begin{tikzpicture}[scale=0.8]
			
			\vertex (a_1) at (0,1) [fill=black] {};
			\vertex (a_2) at (-0.7,1.5) [fill=black] {};
			\vertex (a_3) at (0.7,1.5) [fill=black] {};

			\vertex (a_8) at (0,0.5) [fill=black,label=left:$r_1$] {};
			\vertex (a_9) at (0,1.5) [fill=black] {};
			\vertex (a_{10}) at (-0.2,2) [fill=black] {};
			\vertex (a_{11}) at (0.2,2) [fill=black] {};
			\vertex (a'_2) at (-0.5,0) [fill=black] {};
			\vertex (a'_3) at (0.5,0) [fill=black] {};
			\vertex (a'_5) at (-1,-0.5) [fill=black] {};
			\vertex (a'_6) at (0.2,-0.5) [fill=black] {};
			\vertex (a'_7) at (0.8,-0.5) [fill=black] {};
			\vertex (1) at (1.7,0.5) [fill=black,label=below:$r_2$] {};
			\vertex (2) at (1.3,1) [fill=black] {};
			\vertex (3) at (2.1,1) [fill=black] {};
			\vertex (4) at (2.1,1.5) [fill=black] {};
			\vertex (5) at (3.4,0.5) [fill=black,label=above:$r_3$] {};
			\vertex (6) at (3,0) [fill=black] {};
			\vertex (7) at (3.8,0) [fill=black] {};
			\vertex (8) at (3.4,-0.5) [fill=black] {};
			\vertex (9) at (4.2,-0.5) [fill=black] {};
			\vertex (10) at (2.5,-0.5) [fill=black] {};
			\vertex (11) at (5.1,0.5) [fill=black,label=right:$r_4$] {};
			\vertex (12) at (4.7,1) [fill=black] {};
			\vertex (13) at (5.5,1) [fill=black] {};
			\vertex (14) at (5.1,1.5) [fill=black] {};
			\vertex (15) at (5.9,1.5) [fill=black] {};
			\vertex (16) at (5.5,0) [fill=black] {};
			\vertex (17) at (5.1,-0.5) [fill=black] {};
			\vertex (18) at (5.9,-0.5) [fill=black] {};
			\vertex (19) at (5.8,2) [fill=black] {};

			\path[->]
			(a_8) edge (a_1)
			(a_1) edge (a_2)
			(a_1) edge (a_3)
			
			(a_1) edge (a_9)
			(a_9) edge (a_{10})
			(a_9) edge (a_{11})
			(1) edge (2)
			(1) edge (3)
			(3) edge (4)
			(6) edge (5)
			(7) edge (5)
			(8) edge (7)
			(9) edge (7)
			(10) edge (6)
			(11) edge (12)
			(11) edge (13)
			(13) edge (14)
			(13) edge (15)
			(16) edge (11)
			(17) edge (16)
			(18) edge (16)
			(15) edge (19);
			
			\path[->]
			(1) edge (a_8)
			(1) edge (5)
			(5) edge (11);
			
			\path[->]
			
			(a'_2) edge (a_8)
			(a'_3) edge (a_8)
			
			(a'_5) edge (a'_2)
			(a'_6) edge (a'_3)
			(a'_7) edge (a'_3);

		\end{tikzpicture}\]	
		(c) Path of double-arborescences
	\end{minipage}%
	\caption{Examples of treelike comparability graphs in terms of transitive reductions} \label{fig_4}
\end{figure}

The arborescences were first studied by Wolk in \cite{wolk1962comparability,wolk1965note} and characterized them as $(C_4, P_4)$-free graphs, i.e., the graphs in which none of $P_4$ and $C_4$ is present as an induced subgraph, where $C_n$ is a cycle on $n$ vertices and $P_n$ is a path on $n$ vertices. Further, Golumbic characterized the arborescences as trivially perfect graphs, i.e., the graphs in which for every induced subgraph the size of a largest independent set equals the number of maximal cliques \cite{golumbic1978trivially}. Based on the aforementioned characterizations, linear-time algorithms for recognizing the arborescences are presented in the literature (see \cite{yan1996quasi,chu2008simple}). The arborescences were further generalized and studied in \cite{jung_1978}. In \cite{cornelsen2009treelike}, Cornelsen and Di Stefano proved that a graph is a path of double-arborescences if and only if it is a treelike permutation graph. However, no characterization is available for double-arborescences in the literature.

A word over a finite set of letters is a finite sequence written by juxtaposing the letters of the sequence. A subword $u$ of a word $w$ is a subsequence of the sequence $w$, denoted by $u \ll w$. For instance, $abcabb \ll abbcacbba$. Let $w$ be a word over a set $A$, and $B$ be a subset of $A$. We write $w_B$ to denote the subword of $w$ that precisely consists of all occurrences of the letters of $B$. For example, if $w=abbcacbba$, then $w_{\{b, c\}} = bbccbb$. We say that the letters $a$ and $b$ alternate in $w$ if $w_{\{a, b\}}$ is of the form either $ababa\cdots$ or $babab\cdots$, which can be of even or odd length. A word $w$ is called $k$-uniform if every letter occurs exactly $k$ times in $w$. A graph $G = (V, E)$ is called a word-representable graph if there is a word $w$ with the symbols of $V$ such that, for all $a, b \in V$, $a$ and $b$ are adjacent in $G$ if and only if $a$ and $b$ alternate in $w$; such a word $w$ is called a word-representant of $G$. If a graph is word-representable, then it has infinitely many word-representants \cite{MR2467435}. A word-representable graph $G$ is said to be $k$-word-representable if a $k$-uniform word represents it. It is known that every word-representable graph is $k$-word-representable, for some $k$ \cite{MR2467435}. For a comprehensive introduction to the topic of word-representable graphs, one may refer to the monograph \cite{words&graphs} by Kitaev and Lozin. Further, a minimum-word-representant of a word-representable graph $G$ is a shortest (in terms of its length) word-representant of $G$. The length of a minimum-word-representant of $G$ is denoted by $\ell(G)$. Note that a minimum-word-representant of a word-representable graph need not be uniform. Let $G$ be a word-representable graph on $n$ vertices. It is evident that $\ell(G) \ge n$. It is known that the class of circle graphs\footnote{A circle graph is an intersection graph of chords of a circle.} characterizes the 2-word-representable graphs \cite{MR2914710}. Thus, an obvious upper bound for $\ell(G)$ of a circle graph is $2n$. In the seminal work \cite{Marisa_2020}, Gaetz and Ji considered the subclasses, viz., cycles and trees, of circle graphs and provided explicit formulae for both the length and the number of minimum-word-representants. In \cite{Eshwar_2024}, Srinivasan and Hariharasubramanian proved that there is no circle graph $G$  with $\ell(G) = 2n$ and an edgeless graph $G$ is the only circle graph having $\ell(G) = 2n-1$. Moreover, they showed that $\ell(G) = 2n-2$ for a triangle-free circle graph $G$ containing at least one edge. However, they established through an example (see \cite[Example 2.15]{Eshwar_2024}) that $\ell(G)$ need not be $2n-k$ for a  word-representable graph $G$ with clique number\footnote{A clique is a complete subgraph. The size of a maximum clique in a graph is its clique number.} $k$. In this connection, they posed an open problem to find classes of word-representable graphs $G$ with clique number $k$ such that $\ell(G) = 2n-k$. So far, no examples of such graph classes are available  in the literature for an arbitrary $k$.

In this work, we employ the notion of split-decomposition trees and  characterize double-arborescences as well as arborescences. Further, we find the minimum-word-representants of double-arborescences and show that this class of graphs serves as an example for the above-mentioned open problem from \cite{Eshwar_2024}. In Section 2, we recall the notion of split-decomposition trees and present relevant results from the literature.  In Section 3, we first provide a forbidden induced subgraph characterization of double-arborescences, where we prove that double-arborescences are precisely $P_4$-free treelike comparability graphs. Also, using split decomposition we obtain an alternative proof for the well-known characterization of arborescences given in \cite[Theorem 3]{wolk1965note} (also see \cite{wolk1962comparability}). Next, we consider a more general class of treelike comparability graphs, viz., distance-hereditary graphs. We introduce a notion called s-leaf-path in the minimal split-decomposition tree of a distance-hereditary graph, and using this notion we characterize the class of $P_4$-free distance-hereditary graphs. Consequently, we obtain characterizations of arborescences and double-arborescences with respect to their minimal split-decomposition trees. Finally, in Section 4, we note that arborescences and double-arborescences are word-representable graphs and devise an algorithm based on breadth-first search to construct minimum-word-representants of  arborescences. Moreover, using the algorithm, we also obtain minimum-word-representants of double-arborescences. We prove that if $G$ is a double-arborescence on $n$ vertices with clique number $k$, then $\ell(G) = 2n - k$.

\section{Split-Decomposition Trees}
\label{split-dec-trees}

In this section, we recall the concepts of split decomposition of a connected graph (from \cite{circlegraph3}) and graph-labelled trees (from \cite{graph-labelled_2012}), and present them in a unified framework for fixing the notation of split-decomposition trees. 

The concept of split decomposition was introduced by Cunningham in \cite{cunningham_2,cunningham_1} and it was used to recognize certain classes of graphs such as circle graphs \cite{circlegraph3}, parity graphs \cite{cicerone1999extension}, and distance-hereditary graphs \cite{DHgraph1}. Recently, split decomposition is also used in the context of word-representable graphs \cite{tithi}. Let $G = (V, E)$ be a connected graph.  A split of $G$ is a partition $\{V_1, V_2\}$ of $V$ such that each of $V_1$ and $V_2$ contains at least two vertices, and every vertex in $N_G(V_1)$ is adjacent to every vertex in $N_G(V_2)$, where, for $A \subseteq V$,  $N_G(A) = \bigcup_{a \in A} N_G(a) \setminus A$, called the neighborhood of $A$. A prime graph is a graph without any split. 

A split decomposition of a graph $G = (V, E)$ with split $\{V_1, V_2\}$  is represented as a disjoint union of the induced subgraphs $G[V_1]$ and $G[V_2]$ along with an edge $e = \overline{v_1v_2}$, where $v_1$ and $v_2$ are two new vertices such that $v_1$ and $v_2$ are adjacent to each vertex of $N_G(V_2)$  and $N_G(V_1)$, respectively. We call $v_1$ and $v_2$ as marked vertices and $e$ as a marked edge. By deleting the edge $e$, we obtain two components with vertex sets $V_1 \cup \{v_1\}$ and $V_2 \cup \{v_2\}$ called the split components. The two components are then decomposed recursively to obtain a split decomposition of $G$. A minimal split decomposition of a graph is a split decomposition whose split components can be cliques, stars\footnote{A star is a tree on $n$ vertices with one vertex, called the center, of degree $n-1$.} and prime graphs such that the number of split components is minimized. While there can be multiple split decompositions of a graph, a minimal split decomposition of a graph is unique \cite{cunningham_2,cunningham_1}. 

\begin{figure}[t]
	\centering
	
	\[\begin{tikzpicture}[scale=0.8]
		\vertex (a_1) at (-1, 1) [fill=black,label=above:$1'$] {};
		\vertex (a_2) at (-1, -1) [fill=black,label=below:$2'$] {};
		\vertex (a_3) at (0,0) [fill=black,label=above:${\alpha_1}$] {};
		\vertex (a_4) at (1,0) [fill=black,label=below:${\alpha_2}$] {};
		\vertex (a_5) at (1,1) [fill=black,label=above:$3'$] {};
		\vertex (a_6) at (2,0) [fill=black,label=above:${\alpha_3}$] {};
		\vertex (a_7) at (2,-1) [fill=black,label=below:$4'$] {};
		\vertex (a_8) at (3,0) [fill=black,label=below:${\alpha_4}$] {};
		\vertex (a_9) at (4,1) [fill=black,label=above:$5'$] {};
		\vertex (a_{10}) at (4,-1) [fill=black,label=below:$6'$] {};
		\path
		(a_1) edge (a_3)
		(a_2) edge (a_3)
		(a_3) edge (a_4)
		(a_4) edge (a_5)
		(a_4) edge (a_6)
		(a_6) edge (a_7)
		(a_6) edge (a_8)
		(a_8) edge (a_9) 
		(a_8) edge (a_{10});	
	\end{tikzpicture}\] 
	\caption{A tree $T$} \label{Tree}
\end{figure}
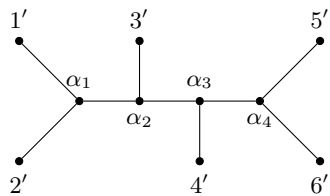 

We now recall the concept of graph-labelled tree introduced by Gioan and Paul \cite{graph-labelled_2012} and its relation with a split decomposition of a graph. Let $T$ be a tree and $\mathcal{F}$ be a family of vertex disjoint graphs. A graph-labelled tree, denoted by $T_{\mathcal{F}}$, is the graph obtained from $T$ by labelling (in fact, by inserting) every internal vertex $\alpha$ of degree $k$ ($\ge 2$) by a graph $G_{\alpha} \in \mathcal{F}$ on $k$ vertices such that there is a bijection from the edges of $T$ incident to $\alpha$ to the vertices of $G_{\alpha}$ (and by replacing the end point $\alpha$ of a tree edge with the corresponding vertex of $G_\alpha$). Note that the set of pendant (degree one) vertices of $T_{\mathcal{F}}$ is precisely the set of leaves of $T$. Given a graph-labelled tree $T_{\mathcal{F}}$ and the family $\mathcal{F}$, we can determine the underlying tree structure $T$. For clarity, we encircle the graphs $G_\alpha$ in $T_{\mathcal{F}}$ that are replacing the internal vertices $\alpha$ of $T$. For instance, for the tree given in Fig. \ref{Tree}, a graph-labelled tree is depicted in Fig. \ref{fig_3}(b). The edges of $T_\mathcal{F}$ in the encircled portions are called $\mathcal{F}$-edges and the remaining edges are called $T$-edges, as they correspond to the tree edges of $T$. A path in $T_{\mathcal{F}}$ is called an alternated path if it alternates between $T$-edges and $\mathcal{F}$-edges. A maximal alternated path is an alternated path that cannot be extended by adding more edges while maintaining the alternatedness. The accessibility graph of a graph-labelled tree $T_{\mathcal{F}}$ is the graph, denoted by $T^A_{\mathcal{F}}$, in which the vertex set is the set of pendant vertices in $T_{\mathcal{F}}$ and any two vertices $a$ and $b$ in $T^A_{\mathcal{F}}$ are adjacent if and only if there is an alternated path between $a$ and $b$ in $T_{\mathcal{F}}$.

Let $\mathcal{H}$ be a split decomposition of a graph $G$. Extend $\mathcal{H}$ by adding one new pendant vertex $a'$ for each non-marked vertex $a$ of $\mathcal{H}$ such that $a$ and $a'$ are adjacent. The graph thus extended can be viewed as a graph-labelled tree $T_{\mathcal{F}}$, called a split-decomposition tree of $G$, where $\mathcal{F}$ consists of the split components of $\mathcal{H}$. If the split components of a split-decomposition tree are cliques (called clique components) and stars (called star components), then it is called a clique-star tree.  We rewrite the uniqeness result by Cunningham in terms of graph-labelled trees in the following theorem and call such a graph-labelled tree as a minimal split-decomposition tree. For example, refer Fig. \ref{fig_3} for a graph and its minimal split-decomposition tree.

\begin{theorem}[\cite{MR3907778,cunningham_2,graph-labelled_2012}]\label{reduced_cs_tree}
	For every connected graph $G$, there exists a unique split-decomposition tree $T_{\mathcal{F}}$ of $G$ such that
	\begin{enumerate}[label=\rm (\roman*)]
		\item\label{acc_iso} the accessibility graph $T^A_{\mathcal{F}}$ is isomorphic to the graph $G$,
		\item every split component is a clique, a star, or a prime graph,
		\item\label{point3} every split component has at least three vertices, 
		\item there is no marked edge with end points in two clique components, and
		\item there is no marked edge between the center of a star component and a leaf of another star component. 
	\end{enumerate}
\end{theorem}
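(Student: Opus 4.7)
The plan is to combine Cunningham's split-decomposition theorem with the graph-labelled-tree formalism of Gioan and Paul. I would structure the argument around two tasks: existence of a decomposition meeting (i)--(v), and uniqueness up to isomorphism of such a decomposition.

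For existence, I would start from the trivial split-decomposition tree consisting of a single node labelled by $G$ itself and iteratively refine it. Whenever some split component admits a nontrivial split $\{V_1, V_2\}$, I replace that node by the two sub-components joined by a marked edge; this step preserves the accessibility graph because the all-to-all adjacency between $N_G(V_1)$ and $N_G(V_2)$ ensures that every original edge crossing the split corresponds to an alternated path through the new marked edge. The process terminates because each refinement strictly increases the number of split components while the total vertex count is bounded. At termination every component is prime, and a standard case analysis shows that the only connected graphs admitting the ``degenerate'' kinds of splits left to consider are cliques and stars, yielding (ii).

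To achieve (iii)--(v), I would apply local reduction rules to the tree just produced. A marked edge between two clique components can be contracted by merging them into a single clique (any bipartition of a clique is itself a split, so the decomposition is unnecessarily fine). A marked edge between the centre of one star and a leaf of another star can be collapsed similarly, merging into a larger star. Components of size two are redundant and can be absorbed into their neighbours. At each step I would verify that the accessibility graph is preserved by tracing how alternated paths behave under the contraction: inside a clique every pair is joined by an $\mathcal{F}$-edge, and the merger of two cliques across a marked edge does not create or destroy any cross-alternation; the star case is dual.

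For uniqueness, the main obstacle, I would invoke Cunningham's notion of a \emph{strong split}, namely a split that does not overlap any other split. The strong splits of a connected graph form a laminar family, and a classical argument shows that this laminar family is in canonical bijection with the internal structure of the tree $T$ underlying $T_{\mathcal{F}}$. It remains to see that conditions (ii)--(v) single out a unique labelling of each internal node: primality fixes the label whenever the local structure is prime, while the forbidden clique--clique and star-centre-to-star-leaf marked edges prevent any further compatible refinement or merging of the degenerate components. I expect the bookkeeping required to translate Cunningham's original language of set systems into the graph-labelled-tree setting to be the most delicate step, even though the underlying combinatorics is classical.
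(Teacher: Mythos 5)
The paper does not prove this statement at all: it is imported verbatim from the literature (Cunningham's uniqueness theorem recast in the graph-labelled-tree language of Gioan and Paul, together with the reduction conditions from Bahrani--Lumbroso), so there is no in-paper proof to compare against. Your sketch is a reasonable reconstruction of how those references establish the result --- full decomposition followed by the clique-join and star-join reduction rules for existence of a tree satisfying (ii)--(v), and Cunningham's laminar family of strong splits for uniqueness --- and that is indeed the standard route. Two small points of care if you were to write it out. First, your termination argument is misstated: the total vertex count is \emph{not} fixed (each split adds two marked vertices); the correct measure is that a split of a component on $m$ vertices produces components on $m_1+1$ and $m_2+1$ vertices with $m_1+m_2=m$ and $m_1,m_2\ge 2$, both strictly smaller than $m$. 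Second, ``at termination every component is prime'' is only true in the weak sense that graphs on three vertices admit no split; cliques and stars get shattered into triangles and $K_{1,2}$'s, and it is precisely the reduction rules enforcing (iv) and (v) that reassemble them into maximal clique and star components --- your subsequent paragraph does handle this, but the intermediate claim as phrased is misleading. With those repairs the outline matches the classical argument the paper is citing.
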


Treelike comparability graphs were characterized using split decomposition in \cite[Theorem 4]{cornelsen2009treelike}. In view of the algorithm provided in \cite[Theorem 5]{cornelsen2009treelike}, we rewrite the characterization of treelike comparability graphs in the setting of graph-labelled trees.

 \begin{theorem}[\cite{cornelsen2009treelike}]\label{Treelike_ch}
	Let $G$ be a graph and $T_{\mathcal{F}}$ be the minimal split-decomposition tree of $G$. Then $G$ is a treelike comparability graph if and only if 
	\begin{enumerate}[label=\rm (\roman*)]
		\item $T_{\mathcal{F}}$ is a clique-star tree,
		\item each clique component has at most two marked vertices, and
		\item there is no marked edge between the centers of two star components. 
	\end{enumerate}
\end{theorem}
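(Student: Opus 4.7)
The plan is to establish both directions of the equivalence by exploiting the uniqueness (up to reversal) of the treelike orientation together with the structural correspondence, provided by Theorem~\ref{reduced_cs_tree}, between a connected graph $G$ and its minimal split-decomposition tree $T_{\mathcal{F}}$. Since the result is already known from \cite{cornelsen2009treelike} in terms of split decompositions, a substantial portion of the argument is a translation from the split-decomposition vocabulary of \cite{cornelsen2009treelike} to the graph-labelled-tree vocabulary of Theorem~\ref{reduced_cs_tree}; I would open the proof by making this dictionary explicit, identifying split components of the minimal split decomposition with the labels on internal vertices of $T_{\mathcal{F}}$ and identifying the marked edges across split components with the $T$-edges.

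For the forward direction, I would assume that $G$ is a treelike comparability graph and let $\overrightarrow{G}$ denote its (essentially unique) treelike orientation. First, I would rule out prime split components: if some internal label $G_\alpha$ were prime, then because splits in $G$ induce complete bipartite joins between $N_G(V_1)$ and $N_G(V_2)$, the prime graph would have to admit a transitive orientation whose Hasse diagram is a tree, and a short case analysis (using that prime graphs have at least five vertices and that the smallest prime comparability graphs force either an induced $C_4$ or a non-tree Hasse diagram) would force a contradiction. This gives condition~(i). For condition~(ii), observe that if a clique component had at least three marked vertices, then following the three $T$-edges into the neighboring components would produce three maximal chains of the induced poset meeting at the clique, which, together with the complete bipartite join forced by each split, would create a vertex in the Hasse diagram with incompatible cover relations — contradicting treeness. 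For condition~(iii), a marked edge between the centers of two star components would force the alternated paths through the two stars to realize a $K_{2,2}$ or $C_4$ in the accessibility graph, again incompatible with the treelike orientation.

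For the reverse direction, I would assume that the minimal split-decomposition tree $T_{\mathcal{F}}$ satisfies (i)--(iii) and explicitly construct a treelike orientation on $G = T^A_{\mathcal{F}}$. Root $T$ at an arbitrary leaf and process internal labels in BFS order. On each clique component, orient the at-most-two marked vertices as the global source and sink of that clique (condition (ii) guarantees this is well defined) and orient the remaining non-marked vertices so that the clique becomes a transitively oriented complete graph consistent with the orientation inherited through its marked edges. On each star component, use the marked edges — which, by condition~(iii), never connect two star centers — to determine whether the center is a local source or a local sink, and orient all star edges accordingly. Gluing these local orientations along $T$-edges produces a transitive orientation of $G$, and its Hasse diagram inherits the tree structure of $T$ because each component contributes a single cover-chain at each attachment point.

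The main obstacle will be verifying global consistency in the reverse direction: showing that the BFS-propagated local orientations never conflict at a shared marked edge, and that the resulting Hasse diagram is genuinely a tree rather than merely acyclic. The at-most-two-marked-vertex condition on clique components and the no-two-star-centers condition on star components are precisely what prevent two incompatible cover relations from meeting at a single vertex of $G$; I would formalize this by an inductive argument on the size of $T$, peeling off leaf labels one at a time and showing that each peel preserves both transitivity and treeness. Once this is done, the equivalence follows, and the statement recovers \cite[Theorem~4]{cornelsen2009treelike} in the graph-labelled-tree setting.
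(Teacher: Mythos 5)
The paper offers no proof of this statement: it is imported verbatim from Cornelsen and Di Stefano (\cite[Theorem 4]{cornelsen2009treelike}) and merely restated in the graph-labelled-tree vocabulary of Theorem \ref{reduced_cs_tree}, so there is no in-paper argument to compare yours against. Judged on its own terms, your sketch has a concrete error in the forward direction, in the justification of condition (iii). You argue that a marked edge between two star centers is impossible because it would ``realize a $K_{2,2}$ or $C_4$ in the accessibility graph, \dots incompatible with the treelike orientation.'' But an induced $C_4$ is \emph{not} incompatible with a treelike orientation: this very paper shows (Lemma \ref{prop_arbor}, Theorem \ref{C_4-free}, and Fig.~\ref{Double-arborescence}) that every strict-double-arborescence --- a treelike comparability graph --- contains an induced $C_4$, and Theorem \ref{lemma_C4} makes clear that induced $C_4$'s correspond to \emph{center-center paths}, of which a direct marked edge between two centers is only the degenerate special case. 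So the presence of an induced $C_4$ cannot be the reason condition (iii) holds; you need an argument that distinguishes the degenerate case (e.g., that $C_4 = K_{2,2}$ itself, whose minimal split-decomposition tree is exactly two stars with adjacent centers, admits only the transitive orientation whose Hasse diagram is the $4$-cycle and hence not a tree, and that this obstruction persists when the two stars sit inside a larger tree) from the non-degenerate center-center paths that treelike graphs do tolerate. As written, the same reasoning would ``prove'' that no treelike comparability graph contains an induced $C_4$, which Theorem \ref{C_4-free} refutes.

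Two further soft spots: the exclusion of prime components in (i) is asserted via an unspecified ``short case analysis'' on small prime comparability graphs, whereas the clean route is that treelike comparability graphs are distance-hereditary and distance-hereditary graphs are exactly those with clique-star minimal split-decomposition trees (\cite{Bouchet_1}, as quoted in Section \ref{split-dec-trees}); and the entire converse --- gluing local orientations along $T$-edges into a global transitive orientation with a tree Hasse diagram --- is exactly where the substance of \cite[Theorems 4--5]{cornelsen2009treelike} lies, and you defer it to an ``obstacle'' paragraph without resolving it. The proposal is therefore not a proof in its current form.
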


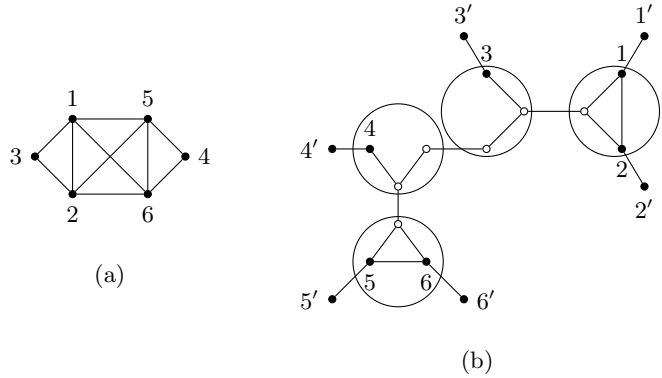
\begin{figure}[t]
	\centering
	\begin{minipage}{.3\textwidth}
		\centering
		\[\begin{tikzpicture}
			\vertex (a_1) at (-1, 0) [fill=black,label=left:$3$] {};
			\vertex (a_2) at (-0.5, 0.5) [fill=black,label=above:$1$] {};
			\vertex (a_3) at (-0.5,-0.5) [fill=black,label=below:$2$] {};
			\vertex (a_4) at (0.5,0.5) [fill=black,label=above:$5$] {};
			\vertex (a_5) at (0.5,-0.5) [fill=black,label=below:$6$] {};
			\vertex (a_6) at (1,0) [fill=black,label=right:$4$] {};
			
			\path
			(a_1) edge (a_2)
			(a_1) edge (a_3)
			(a_2) edge (a_3)
			(a_2) edge (a_4)
			(a_2) edge (a_5)
			(a_3) edge (a_5)
			(a_3) edge (a_4)
			(a_4) edge (a_5) 
			(a_4) edge (a_6)
			(a_5) edge (a_6);
			
		\end{tikzpicture}\] 
			(a)
	\end{minipage}%
	\begin{minipage}{.5\textwidth}
		\centering
		
		\[\begin{tikzpicture}[scale=1]
			
			\vertex (a_1) at (-1, 0.5) [fill=black, label=above:$3$] {};
			\vertex (a_2) at (0.8, 0.5) [fill=black, label=above:$1$] {};
			\vertex (a_3) at (0.8, -0.5) [fill=black, label=below:$2$] {};
			\vertex (a_4) at (-2.55, -2) [fill=black, label=below:$5$] {};
			\vertex (a_5) at (-1.8, -2) [fill=black, label=below:$6$] {};
			\vertex (a_6) at (-2.55, -0.5) [fill=black, label=above:$4$] {};
			\vertex (a_7) at (-2.175, -1) [label=left:$ $] {};
			\vertex (a_8) at (-2.175, -1.5) [label=left:$ $] {};
			\vertex (a_9) at (-1.8, -0.5) [label=above:$ $] {};
			\vertex (a_{10}) at (-1, -0.5) [label=below:$ $] {};
			\vertex (a_{11}) at (-0.5, 0) [label=left:$ $] {};
			\vertex (a_{12}) at (0.3,0) [label=below:$ $] {};
			\vertex (b_1) at (-1.3, 1) [fill=black,label=above:$3'$] {};
			\vertex (b_2) at (1.1, 1) [fill=black,label=above:$1'$] {};
			\vertex (b_3) at (1.1, -1) [fill=black,label=below:$2'$] {};
			\vertex (b_6) at (-3.05, -0.5) [fill=black,label=left:$4'$] {};
			\vertex (b_4) at (-3.05, -2.5) [fill=black,label=left:$5'$] {};
			\vertex (b_5) at (-1.3, -2.5) [fill=black,label=right:$6'$] {};
			
			\path
			(a_1) edge (a_{11})
			(a_2) edge (a_3)
			(a_2) edge (a_{12})
			(a_3) edge (a_{12})
			(a_4) edge (a_5)
			(a_4) edge (a_8)
			(a_5) edge (a_8)
			(a_6) edge (a_7)
			(a_1) edge (b_1)
			(a_7) edge (a_9)
			(a_2) edge (b_2)
			(a_{10}) edge (a_{11})
			(a_3) edge (b_3)
			(a_6) edge (b_6)
			(a_4) edge (b_4)
			(a_5) edge (b_5)
			(a_{12}) edge (a_{11})
			(a_{10}) edge (a_9)
			(a_7) edge (a_8);
			\draw[] (0.7,0) circle[radius=0.6cm];
			\draw[] (-1,0) circle[radius=0.6cm];
			\draw[] (-2.175,-0.5) circle[radius=0.6cm];
			\draw[] (-2.175,-2) circle[radius=0.6cm];
		\end{tikzpicture}\]	
		      (b)
	\end{minipage}%
	
\caption{(a) A graph, and (b) its minimal split-decomposition tree} \label{fig_3}
\end{figure}

A graph $G$ is called a distance-hereditary graph if the distance between any two vertices in any connected induced subgraph of $G$ is same as the distance in $G$. The class of distance-hereditary graphs is more general and includes treelike comparability graphs \cite{cornelsen2009treelike}. In \cite{DH_graph}, a multiple characterizations of distance-hereditary graphs were obtained based on various parameters. The distance-hereditary graphs were characterized as the graphs whose minimal split-decomposition trees are clique-star trees \cite{Bouchet_1}. Further, certain subclasses of distance-hereditary graphs were characterized in terms of their minimal split-decomposition trees in \cite{MR3907778}. In particular, $C_4$-free distance-hereditary graphs were characterized as per the following result. 

\begin{theorem}[\cite{MR3907778}] \label{lemma_C4}
	Let $G$ be a distance-hereditary graph with the minimal split-decomposition tree $T_{\mathcal{F}}$. Then $G$ is $C_4$-free if and only if $T_{\mathcal{F}}$ does not have any center-center path (i.e., an alternated path whose endpoints are the centers of star components, and it does not contain any edge from either of star components). 
\end{theorem}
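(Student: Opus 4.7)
The plan is to exploit the isomorphism $G \cong T^A_\mathcal{F}$ from Theorem~\ref{reduced_cs_tree}, which translates induced $C_4$'s in $G$ into alternated-path configurations on four pendants of $T_\mathcal{F}$. I would prove both directions by contrapositive, recording the key behavioural constraint on alternated paths: inside a star component, an alternated path entering via a leaf must exit via the center and vice versa, while inside a clique component the entry and exit vertices may be any two distinct vertices. A second observation used throughout is that from any marked vertex of $T_\mathcal{F}$ one can continue outward via a $T$-edge into an alternated path terminating at a pendant; this follows from a short induction on the underlying tree, using the minimality conditions of Theorem~\ref{reduced_cs_tree}.

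For the backward direction, assume there is a center-center alternated path $P$ from $c_1$ (the center of a star $S_1$) to $c_2$ (the center of a star $S_2$) that uses no $\mathcal{F}$-edge of $S_1$ or $S_2$. Because every split component has at least three vertices by Theorem~\ref{reduced_cs_tree}\ref{point3}, I can pick distinct leaves $\ell_1 \ne \ell_1'$ of $S_1$ and $\ell_2 \ne \ell_2'$ of $S_2$, none of them lying on $P$. Extending outward from each leaf by the observation above produces four pendants $p_1, p_1', p_2, p_2'$, distinct because their extensions lie in disjoint subtrees of the underlying tree. Concatenating any outer extension at an $S_1$-leaf with the $\mathcal{F}$-edge to $c_1$, the path $P$, the $\mathcal{F}$-edge $c_2\ell_2$ or $c_2\ell_2'$, and the outer extension at that $S_2$-leaf, gives an alternated path, so $p_i \sim p_j'$ for all four cross pairs. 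Conversely, any alternated path joining $p_1$ and $p_1'$ would have to enter $S_1$ via both $\ell_1$ and $\ell_1'$, forcing a revisit of $S_1$, which is impossible in a tree; similarly $p_2 \not\sim p_2'$. Hence $\{p_1, p_2, p_1', p_2'\}$ induces a $C_4$ in $G$.

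For the forward direction, start with an induced $C_4$ on $a,b,c,d$ and its four realizing alternated paths $P_{ab}, P_{bc}, P_{cd}, P_{da}$. Working in the minimal subtree $T^*$ of $T_\mathcal{F}$ spanned by the four pendants, I would use the alternation constraint together with the non-existence of alternated $a$--$c$ and $b$--$d$ paths to locate two ``obstructing'' star components. Concretely, the pair $(P_{ab}, P_{ad})$ must diverge at some component, and the only way this divergence can be compatible with $a \not\sim c$ (equivalently, no alternated concatenation of the two along the shared prefix exists) is if the divergence occurs at a star component $S_1$ where one of the two paths passes through the center $c_1$ and the other bypasses it; the symmetric analysis applied to $(P_{cb}, P_{cd})$ produces the second star $S_2$ with center $c_2$. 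The portion of $T^*$ joining $c_1$ and $c_2$ is then an alternated path realizing a center-center path, and the choice of $c_1, c_2$ at divergence points ensures it avoids any $\mathcal{F}$-edge of $S_1$ or $S_2$.

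The main obstacle is this last step: rigorously pinning down the two star centers and checking that the connecting alternated path truly avoids the $\mathcal{F}$-edges of $S_1$ and $S_2$. The delicate bookkeeping splits by whether the divergence components are cliques or stars and by the orientations in which each of the four paths enters them, and is where the argument requires the most care.
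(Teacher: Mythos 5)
First, a point of comparison: the paper does not prove this theorem at all --- it is quoted from Bahrani and Lumbroso \cite{MR3907778} and used as a black box --- so your proposal can be judged only on its own merits, not against an in-paper argument. Your backward direction is essentially correct: the four outer extensions exist and terminate at distinct pendants by Lemma \ref{lemma_1}, the four cross pairs are joined by concatenating the extensions with the leaf--center $\mathcal{F}$-edges and the center-center path, and $p_1,p_1'$ (resp.\ $p_2,p_2'$) are non-adjacent because the unique path in the underlying tree forces a traversal of the star $S_1$ (resp.\ $S_2$) between two of its leaves, which costs two consecutive $\mathcal{F}$-edges --- that, rather than any ``revisit'' of $S_1$, is the reason alternation fails.

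The forward direction, however, has a genuine gap, located exactly at the step you flag as delicate. Two alternated paths issuing from the same pendant diverge at the component sitting at the median, in the underlying tree $T$, of the three pendants involved; there they share their entry vertex $v$ and exit at distinct vertices $x,y$, and non-adjacency of the two far endpoints forces $x\not\sim y$, hence a star with center $v$ (note that both paths pass through the center --- neither ``bypasses'' it, contrary to your description). Now run your choice of pairs: $(P_{ab},P_{ad})$ diverges at the median of $\{a,b,d\}$ and $(P_{cb},P_{cd})$ at the median of $\{c,b,d\}$. Once one checks --- as one must anyway --- that the quartet topology of $a,b,c,d$ in $T$ is $ac\,|\,bd$ with internal nodes $m_1$ (on the $a,c$ side) and $m_2$ (on the $b,d$ side) (the other two topologies force one of the four realizing paths to cross a star component between two of its leaves, which is impossible), both of these medians equal $m_2$. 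So your $S_1$ and $S_2$ are the \emph{same} star component with the \emph{same} center, and the ``portion of $T^*$ joining $c_1$ and $c_2$'' is empty: no center-center path is produced. The repair is to pair divergences taken from two \emph{adjacent} vertices of the $C_4$, one from each side of the split: $(P_{ab},P_{ad})$ gives a star at $m_2$ whose center is matched to the tree edge towards $m_1$, while $(P_{ba},P_{bc})$ gives a star at $m_1$ whose center is matched to the tree edge towards $m_2$; the segment of $P_{ab}$ between these two centers is then alternated, avoids both stars' $\mathcal{F}$-edges (it leaves and enters through the centers' own tree edges), and is the required center-center path. One must also dispose of the degenerate case $m_1=m_2$, where the single component would have to contain an induced $C_4$ on the four vertices matched to the directions of $a,b,c,d$ --- impossible in a clique or a star.
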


In this work, we characterize the class of $P_4$-free distance-hereditary graphs in terms of their minimal split-decomposition trees (see Theorem \ref{lemma_P4}). In this connection, we need the following properties of maximal alternated paths in split-decomposition trees.

\begin{lemma}[\cite{MR3907778}] \label{lemma_1}
	Let $T_{\mathcal{F}}$ be the minimal split-decomposition tree of a distance-hereditary graph $G$ and $G_{\alpha}$ be a split component of $T_{\mathcal{F}}$. We have the following properties of maximal alternated paths in $T_{\mathcal{F}}$.
	\begin{enumerate}[label=\rm (\roman*)]
		\item\label{maximal_alter_path} There exists a maximal alternated path from any vertex of $G_{\alpha}$ but does not contain any edge of $G_{\alpha}$.
		
		\item Any maximal alternated path starting from a vertex of $G_{\alpha}$ ends in a pendant vertex of $T_{\mathcal{F}}$.
		
		\item\label{point_3} Let $P$ and $Q$ be two maximal alternated paths from distinct vertices of $G_{\alpha}$. If $P$ and $Q$ do not contain any edge of $G_{\alpha}$, then they end at distinct pendant vertices of $T_{\mathcal{F}}$.
	\end{enumerate}	
\end{lemma}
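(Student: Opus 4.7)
The plan is to exploit two structural facts that follow from the hypotheses. First, by Bouchet's characterization, since $G$ is distance-hereditary every split component of $T_{\mathcal{F}}$ is a clique or a star, and by Theorem \ref{reduced_cs_tree} every component has at least three vertices. Second, every non-pendant vertex of $T_{\mathcal{F}}$ is incident to exactly one $T$-edge and at least one $\mathcal{F}$-edge, whereas every pendant vertex is incident to a single edge, which is a $T$-edge.

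For (i) and (ii) I would argue in a single sweep. Starting from any $v \in V(G_{\alpha})$, take its unique incident $T$-edge (which immediately leaves $G_{\alpha}$) and then extend greedily, alternating edge types. The central claim is that so long as the current endpoint $u$ is not a pendant, an extension is always available. If the last edge used was a $T$-edge, then $u$ lies in some split component $G_{\beta}$ and has at least one $\mathcal{F}$-neighbor in $G_{\beta}$; moreover none of these neighbors has been visited, because a simple alternated path cannot revisit a split component --- otherwise the induced walk on the underlying tree $T$, obtained by contracting each split component, would contain a cycle. If instead the last edge used was an $\mathcal{F}$-edge, then $u$'s unique $T$-edge is unvisited and leads either to a pendant or to a fresh split component. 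Finiteness of $T_{\mathcal{F}}$ forces termination at a pendant, which establishes both the existence asserted in \ref{maximal_alter_path} and the ``ends at a pendant'' conclusion of (ii).

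For \ref{point_3} I would contract each split component of $T_{\mathcal{F}}$ to recover the underlying tree $T$. The vertex of $T$ corresponding to $G_{\alpha}$ has degree $|V(G_{\alpha})|$, with one $T$-incident edge per vertex of $G_{\alpha}$, so removing it partitions $T$ into disjoint subtrees $\{T_v : v \in V(G_{\alpha})\}$. A maximal alternated path $P$ from $v$ that avoids the edges of $G_{\alpha}$ must begin with the $T$-edge from $v$, which enters $T_v$, and thereafter it stays within the split components lying in $T_v$ (any exit would require either an edge of $G_{\alpha}$ or a cycle in $T$). For distinct $v, w \in V(G_{\alpha})$, the subtrees $T_v$ and $T_w$ are disjoint, so the terminating pendants of the two maximal alternated paths lie in disjoint regions of $T_{\mathcal{F}}$ and are therefore distinct.

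The main subtlety is the ``no revisits'' assertion that underpins the ``fresh neighbor'' step in \ref{maximal_alter_path}/(ii); once this is reduced to the acyclicity of $T$, the rest of the argument is straightforward bookkeeping inside the clique/star structure granted by Bouchet's theorem.
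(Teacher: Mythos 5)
The paper does not prove this lemma at all: it is quoted, with citation, from Bahrani--Lumbroso \cite{MR3907778}, so there is no in-paper argument to compare yours against. Judged on its own, your proof is correct and essentially the standard one. The two load-bearing observations are exactly right: (a) every non-pendant vertex of $T_{\mathcal{F}}$ carries exactly one $T$-edge and, since split components are connected with at least three vertices (Theorem \ref{reduced_cs_tree}\ref{point3}), at least one $\mathcal{F}$-edge, so a greedy alternating extension never stalls except at a pendant; and (b) a simple alternated path cannot revisit a split component, because its projection onto the underlying tree $T$ would be a closed walk, which in a tree forces a repeated edge of $T$ and hence a repeated $T$-edge of the path. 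Point (b) is also what guarantees, in the $\mathcal{F}$-edge-last case, that the far endpoint of the unique outgoing $T$-edge is genuinely fresh. Your argument for \ref{point_3} via the subtrees $T_v$ obtained by deleting $\alpha$ from $T$ is clean and correct: a path from $v$ avoiding the edges of $G_{\alpha}$ must open with the $T$-edge at $v$ and, again by (b), can never cross back through $\alpha$, so it is trapped in $T_v$, and the $T_v$ are pairwise disjoint. The only caveat worth recording is definitional and is inherited from the statement itself rather than introduced by you: the path you build in \ref{maximal_alter_path} is maximal in the sense that it cannot be prolonged at its terminal (pendant) end; under a literal reading of ``cannot be extended by adding more edges'' one could still prepend an $\mathcal{F}$-edge of $G_{\alpha}$ at $v$, so ``maximal alternated path from a vertex'' must be read as maximal among alternated paths issuing from that vertex, which is how the lemma is used throughout the paper.
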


\section{Characterizations}

In this section, we aim to characterize double-arborescences and arborescences in terms of their split-decomposition trees. For these subclasses, first we provide forbidden induced subgraph characterizations. Further, we obtain a necessary and sufficient condition to identify a more general class, viz., $P_4$-free distance-hereditary graphs. Consequently, we give one more characterization each for double-arborescences and arborescences.       

\begin{lemma}\label{path_double_arb}
	Suppose $G$ is a path of $k$ double-arborescences for some $k \ge 2$ such that $k$ is the smallest possible. Then $G$ contains $P_4$ as an induced subgraph.
\end{lemma}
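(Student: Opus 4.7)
The plan is to construct an induced $P_4$ using vertices from two adjacent double-arborescences in the decomposition. Let $D_1, D_2$ be consecutive double-arborescences in the given path, with roots $r_1, r_2$ joined by a Hasse edge that, without loss of generality, is directed $r_1 \to r_2$; so $r_1 < r_2$ in the poset of $G$.

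The first step is to exploit the minimality of $k$. Consider the induced subgraph $H = G[V(D_1) \cup V(D_2)]$. Since $V(D_1) \cup V(D_2)$ spans a subtree of the Hasse tree of $G$, the graph $H$ is itself a treelike comparability graph, whose Hasse diagram is precisely this subtree. If some vertex of $H$ were adjacent in $H$ to all other vertices of $H$, then $H$ would be a double-arborescence, and replacing $D_1, D_2$ by $H$ in the decomposition would exhibit $G$ as a path of $k-1$ double-arborescences, contradicting the minimality of $k$. Hence no vertex of $H$ is all-adjacent in $H$. In particular, since $r_1$ is adjacent to every vertex of $V(D_1) \cup \{r_2\}$, there must exist $v \in V(D_2) \setminus \{r_2\}$ with $v \not\sim r_1$. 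Because $v$ is comparable to $r_2$ (as $r_2$ is the root of $D_2$) and $v > r_2$ would force $v > r_1$ by transitivity, we must have $v < r_2$. By a symmetric argument, there exists $u \in V(D_1) \setminus \{r_1\}$ with $u \not\sim r_2$ and $u > r_1$.

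Finally, I would verify that $\{u, r_1, r_2, v\}$ induces a $P_4$. The edges $ur_1$, $r_1 r_2$, and $r_2 v$ are immediate from the choices of the roots and the Hasse orientation, while the non-edges $ur_2$ and $r_1 v$ hold by construction. For the remaining pair $u,v$, the unique path in the Hasse tree is $u \to r_1 \to r_2 \to v$, which descends from $u$ to $r_1$ (since $u > r_1$), ascends from $r_1$ to $r_2$, and then descends from $r_2$ to $v$ (since $v < r_2$); being non-monotone, $u$ and $v$ are incomparable in $G$ and hence non-adjacent. The main obstacle is the first step: deriving the relative positions $u > r_1$ and $v < r_2$ from the failure of $r_1,r_2$ to be all-adjacent in $H$, which is in turn forced by the minimality of $k$. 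The remaining verifications are standard tree-path analyses.
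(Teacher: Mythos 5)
Your construction of $u$ is sound and is essentially the paper's first step: if every vertex of $D_1$ lay below $r_1$, then (since $r_1 < r_2$) every vertex of $D_1$ would be comparable to $r_2$, so $D_1 \cup D_2$ would be a double-arborescence \emph{rooted at $r_2$}, and the root-path $\langle r_2, r_3, \ldots, r_k \rangle$ would exhibit $G$ as a path of $k-1$ double-arborescences. The gap is in the symmetric step producing $v$. Your contradiction argument --- ``if some vertex of $H$ were all-adjacent in $H$, replacing $D_1, D_2$ by $H$ would give a path of $k-1$ double-arborescences'' --- is valid only when that all-adjacent vertex is $r_2$, because the definition requires consecutive \emph{roots} to be joined by edges of the Hasse tree, and it is $r_2$, not $r_1$, that carries the Hasse edge to $r_3$. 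If the all-adjacent vertex of $H$ is $r_1$, the proposed re-decomposition $H, D_3, \ldots, D_k$ has roots $r_1, r_3, \ldots, r_k$, which do not form a path in the Hasse tree, so minimality of $k$ is not contradicted. Consequently a vertex $v \in V(D_2)\setminus\{r_2\}$ with $v < r_2$ and $v \not\sim r_1$ need not exist.

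A concrete counterexample is $G = P_5$ on $\{u, r_1, r_2, r_3, y\}$ with treelike orientation given by the covering relations $r_1 \prec: u$, $r_1 \prec: r_2$, $r_3 \prec: r_2$, $r_3 \prec: y$. This is a path of the three double-arborescences $\{u, r_1\}$, $\{r_2\}$, $\{r_3, y\}$ with root-path $\langle r_1, r_2, r_3 \rangle$, and $k=3$ is minimal: $P_5$ has no all-adjacent vertex, and each of the four edge-cuts of the Hasse path leaves a component that is not a double-arborescence rooted at the corresponding cut endpoint. Yet $r_1$ is all-adjacent in $H = G[\{u, r_1, r_2\}]$, and $V(D_2)\setminus\{r_2\} = \varnothing$, so your $v$ does not exist; the fourth vertex of the induced $P_4$ must come from $D_3$. (The same phenomenon occurs with a nontrivial $D_2$: add a vertex $x$ with $r_2 \prec: x$.) This is precisely why the paper's proof does not confine itself to $D_1$ and $D_2$: after finding $a_1 = u$, it searches the entire tail $\bigcup_{i \ge 2} G_i$ for a vertex $a_2$ lying below some root $r_t$ with $t \ge 2$, via a case analysis on the orientations of the root-path edges (invoking minimality again only in the case where the whole tail lies above $r_1$, which would make $G$ a double-arborescence rooted at $r_1$), and then takes $\langle a_1, r_1, r_t, a_2 \rangle$. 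To repair your argument you would need to replace the second half with such a global search along the root path.
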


\begin{proof}
	Let $D$ be the digraph corresponding to the treelike orientation of $G$. Further, let $D'$ be the transitive reduction of $D$, i.e., the spanning subgraph of $D$ obtained by deleting the transitive edges. Let $G_i$ with root $r_i$, $1 \le i \le k$, be the $k$ double-arborescences in $G$ such that $\langle r_1, r_2, \ldots, r_k \rangle$ is the root-path of $D'$.   
	
	Suppose the edge $\overline{r_1r_2}$ is oriented as $\overrightarrow{r_1r_2}$ in $D'$. 
	We claim that there exists a vertex $a_1$ in $G_1$ such that $\overrightarrow{r_1a_1}$ is in $D'$. On the contrary, suppose $\overrightarrow{ar_1}$ in $D$ for all vertices $a$ in $G_1$. Hence, we have $\overrightarrow{ar_2}$ in $D$ for all vertices $a$ in $G_1$. Then, $G$ is a path of $k - 1$ double-arborescences, viz., $G_1 \cup G_2, G_3, \ldots, G_k$ with the root-path $\langle r_2, r_3, \ldots, r_k \rangle$ in $D'$; a contradiction to the minimality of $k$. Further, there exist a vertex $a_2$ in $\displaystyle\bigcup_{2 \le i \le k} G_i$ such that $\overrightarrow{a_2r_t}$ is in $D'$, for some $t \ge 2$,  as shown in the following cases:   
	\begin{itemize}
		\item Case-1: $\overrightarrow{r_{i+1}r_i}$ in $D'$ for some $i \ge 2$. Let $t$ be the least such that $\overrightarrow{r_{t+1}r_t}$ in $D'$. Then we choose $a_2$ to be $r_{t+1}$.
		
		\item Case-2: $\overrightarrow{r_{i}r_{i+1}}$ for all $i < k$ in $D'$. For each $i \ge 2$, if $\overrightarrow{r_ia}$ in $D$ for all vertices $a$ in $G_i$,  then $\overrightarrow{r_1a}$ in $D$ for all vertices $a$ in $\displaystyle\bigcup_{2 \le i \le k} G_i$. In which case, $G$ is a double-arborescence with the root $r_1$; a contradiction to the minimality of $k$. Hence, there exists $a_2$ in $G_t$, for some $t \ge 2$, such that $\overrightarrow{a_2r_t}$ is in $D'$.
	\end{itemize}
	The path $\langle a_1, r_1, r_t, a_2 \rangle$ is an induced $P_4$ in $G$. Indeed, other than these three edges, there will not be any more edges between $a_1, r_1, r_t$ and $a_2$ in $G$ as there is no directed path between $a_1$ and $r_t$; $a_1$ and $a_2$; or $r_1$ and $a_2$ in $D'$. 
	
	Similarly, one can observe that if $\overline{r_1r_2}$ is oriented as $\overrightarrow{r_2r_1}$ in $D'$, then there exist vertices $a_1$ in $G_1$ and $a_2$ in $\displaystyle\bigcup_{2 \le i \le k} G_i$ such that $\overrightarrow{a_1r_1}$ and $\overrightarrow{r_ta_2}$ are in $D'$ for some $t \ge 2$ so that $\{a_1, r_1, r_t, a_2\}$ induces a $P_4$ in $G$. \qed
\end{proof}

We now give a forbidden induced subgraph characterization for \break double-arborescences within treelike comparability graphs.

\begin{theorem}\label{P_4-free}
	A graph $G$ is a double-arborescence if and only if $G$ is a $P_4$-free treelike comparability graph. 
\end{theorem}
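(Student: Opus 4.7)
The plan is to prove the two implications separately.

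For the forward direction I would show that a double-arborescence decomposes as a join of two arborescences glued along the root, and then invoke Wolk's $(C_4,P_4)$-free characterization of arborescences. Specifically, let $G$ be a double-arborescence with root $r$ under the treelike orientation. I will set $V^{+}=\{v: v>r\}$ and $V^{-}=\{v: v<r\}$; these partition $V(G)\setminus\{r\}$ because $r$ is all-adjacent. Transitivity ($v<r<u$) makes every vertex of $V^{+}$ adjacent to every vertex of $V^{-}$, so $G$ is the join of $G[V^{-}]$, $\{r\}$, and $G[V^{+}]$. I will then argue that $V^{+}\cup\{r\}$ is an upper set of $P_G$ which is connected in the Hasse tree (every ascending tree-path from $r$ to a vertex of $V^{+}$ stays inside $V^{+}\cup\{r\}$), so its Hasse diagram is a subtree. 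Since $r$ remains all-adjacent and is now a source in this subposet, $G[V^{+}\cup\{r\}]$ is an arborescence in the sense of the paper. Wolk's theorem then yields that $G[V^{+}\cup\{r\}]$, and hence $G[V^{+}]$, is $P_4$-free; an analogous argument applies to $G[V^{-}]$.

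It remains to observe that $P_4$-freeness is preserved under join. Any induced $P_4$ with vertex sequence $a,b,c,d$ has the non-edges $ac$, $ad$, and $bd$; since any two vertices lying in different parts of the join are automatically adjacent, these non-edges force all four vertices into a single part. Because each part $G[V^{-}]$, $\{r\}$, $G[V^{+}]$ is $P_4$-free (and $\{r\}$ has only one vertex), this is impossible, so $G$ itself is $P_4$-free.

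For the converse, let $G$ be a $P_4$-free treelike comparability graph. By the decomposition of Cornelsen and Di~Stefano, $G$ is a path of $k$ double-arborescences for some minimal $k\ge 1$. If $k\ge 2$, Lemma~\ref{path_double_arb} produces an induced $P_4$ in $G$, contradicting the hypothesis. Hence $k=1$ and $G$ is itself a double-arborescence. The only delicate point in the argument is the verification that the Hasse diagram of $P_G$ restricted to $V^{+}\cup\{r\}$ really coincides with the Hasse diagram of the induced subposet (which uses that $V^{+}\cup\{r\}$ is upper-closed, so that no covering relation in the subposet can be refined by an element of $V^{-}$); once this is in place, the join decomposition together with Wolk's characterization of arborescences makes the forward direction essentially automatic, and Lemma~\ref{path_double_arb} dispatches the converse.
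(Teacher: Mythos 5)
Your proposal is correct, and its forward direction takes a genuinely different route from the paper's. The paper argues by contradiction inside the class of distance-hereditary comparability graphs: if a double-arborescence contained an induced $P_4$, adding the all-adjacent root would produce the complement of $K_1\cup P_4$ (the Gem of Fig.~\ref{fig_8}), which Di~Stefano's forbidden-subgraph theorem excludes. You instead make explicit the structural fact that a double-arborescence is the join of $G[V^-]$, $\{r\}$ and $G[V^+]$, verify that each of $G[V^\pm\cup\{r\}]$ is an arborescence (your check that the Hasse diagram of the up-set $V^+\cup\{r\}$ is the induced subtree, because the up-set is closed under intermediate elements, is exactly the point that needs care and you handle it), invoke the classical Wolk characterization for $P_4$-freeness of each part, and finish with the standard observation that $P_4$-freeness is closed under joins. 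Your route is more elementary and self-contained, and the join decomposition is a pleasant structural byproduct; the paper's route is shorter but leans on the heavier machinery of \cite[Theorem 3.1]{Stefano_2012}. One caution: you must cite Wolk's original theorem rather than the paper's Theorem~\ref{alter_proof} or Corollary~\ref{ch_arborescence}, since those are derived downstream of Theorem~\ref{P_4-free}; as written you do refer to Wolk, so there is no circularity. The converse is essentially identical to the paper's; the only step you should make explicit is that a $P_4$-free graph is a permutation graph, which is what licenses the Cornelsen--Di~Stefano decomposition of $G$ into a path of double-arborescences before Lemma~\ref{path_double_arb} is applied.
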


\begin{proof}
	Suppose $G$ is a double-arborescence with a root $r$. Since $G$ is a treelike comparability graph, by Theorem \ref{Treelike_ch}, $G$ is distance-hereditary. If $G$ contains a $P_4$ induced by, say, $\{a_1, a_2, a_3, a_4\}$, then clearly $r \neq a_i$, for all $1 \leq i \leq 4$, as $r$ is an all-adjacent vertex of $G$. Note that the graph induced by $\{r, a_1, a_2, a_3, a_4\}$ is the complement of $K_1 \cup P_4$ (see Gem in Fig. \ref{fig_8}). But, by \cite[Theorem 3.1]{Stefano_2012}, it is a forbidden induced subgraph for a distance-hereditary comparability graph. Hence, $G$ is $P_4$-free.
	
	Conversely, suppose a treelike comparability graph $G$ is $P_4$-free. Note that $P_4$-free graphs are permutation graphs (cf. \cite{Bose_1998}). Thus, by \cite[Theorem 6]{cornelsen2009treelike}, $G$ is a path of $k$ double-arborescences such that $k$ is the smallest number of double-arborescences in $G$. If $k \ge 2$, then by Lemma \ref{path_double_arb}, $G$ induces a $P_4$; which is not possible. Hence, $k = 1$ so that $G$ is a double-arborescence.
	\qed
\end{proof}

\begin{lemma}\label{prop_arbor}
	Suppose $G$ is a strict-double-arborescence with a root $r$. There exist two non-adjacent vertices $a_1$ and $a_2$ in $G$ such that  $\overrightarrow{ra_1}$ and $\overrightarrow{ra_2}$. Similarly, there exist two non-adjacent vertices $a_3$ and $a_4$ in $G$ such that  $\overrightarrow{a_3r}$ and $\overrightarrow{a_4r}$.
\end{lemma}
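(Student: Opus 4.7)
The plan is a proof by contradiction, handled symmetrically for the two assertions. Let $U = \{v \in V(G) : \overrightarrow{rv}\}$ and $D = \{v \in V(G) : \overrightarrow{vr}\}$, where the arrows refer to the treelike orientation of $G$. Because $G$ is a strict-double-arborescence, $r$ is neither a source nor a sink, so both $U$ and $D$ are non-empty.

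For the first assertion I would suppose, toward a contradiction, that every two vertices of $U$ are adjacent in $G$. Since $G$ is a comparability graph, adjacency coincides with comparability in the poset $P_G$, so $U$ forms a chain $r_1 < r_2 < \cdots < r_k$. The maximum element $r_k$ is then comparable to every other $r_i$ along the chain, to $r$ (via $r < r_1 \le r_k$), and to every $d \in D$ (by transitivity from $d < r < r_k$); hence $r_k$ is adjacent to every other vertex of $G$, i.e.\ $r_k$ is all-adjacent. Moreover, every oriented edge incident to $r_k$ points into $r_k$, so $r_k$ has outdegree zero and is a sink of the treelike orientation. Taking $r_k$ as the root therefore exhibits $G$ as an arborescence, contradicting the hypothesis that $G$ is a strict-double-arborescence. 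So two vertices of $U$ must be non-adjacent, yielding the required $a_1$ and $a_2$.

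The argument for $a_3, a_4$ is the mirror image: if every two vertices of $D$ were adjacent, then $D$ would form a chain whose minimum element $d$ is all-adjacent (comparable to every other element of $D$, to $r$, and to every $u \in U$ via $d < r < u$) and is a source (indegree zero). Again $G$ would be an arborescence, contradicting the hypothesis.

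The only point requiring care is the transitivity bookkeeping needed to promote the extremal element of the hypothesized clique to an all-adjacent source or sink; once that is in hand, the contradiction with strictness is immediate from the definitions given in the introduction. I do not foresee any deeper technical obstacle.
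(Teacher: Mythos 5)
Your proof is correct and follows essentially the same strategy as the paper: assume the out-neighborhood (resp.\ in-neighborhood) of $r$ is a clique, observe that it must then be a chain of the poset (the paper says ``a directed path from $r$ in the transitive reduction''), and promote its extremal element to an all-adjacent sink (resp.\ source), contradicting strictness. The transitivity bookkeeping you flag is handled correctly, so nothing further is needed.
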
	

\begin{proof}
	Suppose $G = (V, E)$ is a strict-double-arborescence with a root $r$. Let $D$ be the digraph corresponding to the treelike orientation of $G$ and $D'$ be the transitive reduction of $D$. Let  $A_1 = \{a \in V \mid \overrightarrow{ra} \ \text{in} \ D\}$ and $A_2 = \{a \in V \mid \overrightarrow{ar} \ \text{in} \ D\}$. Note that $A_1 \neq \varnothing$, $A_2 \neq \varnothing$, and $V = \{r\} \cup A_1 \cup A_2$.  Moreover, if $a, b \in A_1$ are adjacent in $G$, then they are on a directed path from $r$ in $D'$. Accordingly, if the vertices of $A_1$ form a clique in $G$, then the vertices of $A_1$ are on a directed path from $r$ in $D'$. In that case, $G$ can be seen as an arborescence whose root would be the end point, say $a'$, of the above mentioned directed path, i.e., $V = \{a'\} \cup \{a \in V \mid \overrightarrow{aa'} \ \text{in} \ D\}$; a contradiction to $G$ is a strict-double-arborescence. Hence, there exist two vertices $a_1, a_2 \in A_1$ such that $a_1$ and $a_2$ are not adjacent in $G$. Similarly, there exist two non-adjacent vertices in $A_2$. \qed
\end{proof}

\begin{theorem}\label{C_4-free}
	Let $G$ be a treelike comparability graph. Then $G$ is $C_4$-free if and only if $G$ does not contain a strict-double-arborescence as its induced subgraph.
\end{theorem}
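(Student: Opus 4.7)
The plan splits naturally into two directions. The forward implication is almost immediate from Lemma~\ref{prop_arbor}: if $G$ contains a strict-double-arborescence $H$ as an induced subgraph with root $r$, that lemma yields non-adjacent $a_1, a_2$ in $H$ with $\overrightarrow{ra_1}$ and $\overrightarrow{ra_2}$, and non-adjacent $a_3, a_4$ in $H$ with $\overrightarrow{a_3r}$ and $\overrightarrow{a_4r}$ in the treelike orientation of $H$. Transitivity then forces edges $a_3a_1, a_3a_2, a_4a_1, a_4a_2$, so $\{a_1, a_3, a_2, a_4\}$ induces a $C_4$ in $H$, and hence in $G$.

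For the converse, the strategy is to show that any induced $C_4$ in $G$ can be extended by a single vertex of $G$ to form the smallest strict-double-arborescence (the $5$-vertex one). Suppose $\{b_1, b_2, b_3, b_4\}$ induces a $C_4$ in $G$ with edges $b_1b_2, b_2b_3, b_3b_4, b_4b_1$. Any transitive orientation of $C_4$ has two sources and two sinks alternating around the cycle, so without loss of generality I will assume $b_1, b_3 < b_2$ and $b_1, b_3 < b_4$ in the poset of $G$. Let $T$ be the Hasse diagram of $G$; recall that $T$ is a tree and that $x < y$ holds iff the unique undirected $T$-path between $x$ and $y$ is the directed Hasse chain from $x$ to $y$. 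The candidate vertex $r$ will be the point $u$ at which the directed paths $b_1 \to b_2$ and $b_1 \to b_4$ diverge in $T$, so that $b_1 \le u$ and $u \le b_2, b_4$; define $v$ analogously as the divergence point of $b_3 \to b_2$ and $b_3 \to b_4$.

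The crucial step, and the one I expect to be the main obstacle, is the equality $u = v$. Both $u$ and $v$ lie on the unique undirected $T$-path between $b_2$ and $b_4$: deleting $u$, say, separates $b_2$ from $b_4$ in $T$ because the up-edges from $u$ toward $b_2$ and toward $b_4$ are distinct $T$-edges. If $u \ne v$, I may assume $v$ lies strictly between $u$ and $b_4$ on this tree-path; that sub-path is the directed chain $u \to b_4$, so $u \le v \le b_4$. Then the unique $T$-path from $v$ to $b_2$ would have to descend from $v$ to $u$ and then ascend from $u$ to $b_2$, which is not a directed chain, contradicting $v \le b_2$. Hence $u = v$, and setting $r := u$ yields $b_1, b_3 < r$ and $r < b_2, b_4$; the inequalities are strict because $r = b_i$ for any $i$ would turn one of the $C_4$ non-edges $b_1b_3$ or $b_2b_4$ into a comparable pair. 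Therefore $\{r, b_1, b_2, b_3, b_4\}$ induces the $5$-vertex strict-double-arborescence in $G$, completing the argument.
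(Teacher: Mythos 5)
Your proof is correct, and while the forward direction coincides with the paper's (both extract the two incomparable pairs above and below the root via Lemma~\ref{prop_arbor} and let transitivity close the $4$-cycle), your converse takes a genuinely different route. The paper disposes of the converse in two lines by citing an external result of Dobson, Gutierrez and Szwarcfiter \cite{dobson2004treelike} asserting that the common neighborhood of the four vertices of an induced $C_4$ in a treelike comparability graph is a (nonempty) clique, and then observes that any such common neighbor $a$ together with the $C_4$ induces the $5$-vertex strict-double-arborescence. You instead prove the existence of that common neighbor from first principles: after noting that the induced orientation of the $C_4$ must alternate sources and sinks, you locate the candidate root as the divergence point of the two upward Hasse chains out of one source, show by a monotonicity argument on the unique tree paths that the two sources yield the same divergence point, and rule out degenerate coincidences with the cycle vertices using the two non-edges of the $C_4$. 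This buys a self-contained argument that does not depend on \cite{dobson2004treelike} and in effect reproves the relevant special case of their proposition, at the cost of being longer; the paper's version is shorter but imports the nonemptiness of the common neighborhood as a black box. One cosmetic remark: when you assume $v$ lies ``strictly between $u$ and $b_4$,'' you should note (or it should be implicit, as it is in your final contradiction) that $v \neq b_2, b_4$, since $v = b_4$ would already force $b_4 \preceq b_2$; your descent-then-ascent argument covers this case anyway, so the gap is only expository.
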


\begin{proof}
	Let $D$ be the digraph corresponding to the treelike orientation of $G$. Suppose $G$ is $C_4$-free and contains a strict-double-arborescence, say $H$ with root $r$, as its induced subgraph. Then, by Lemma \ref{prop_arbor}, there exist two pairs of non-adjacent vertices $a_1$, $b_1$ and $a_2$, $b_2$ in $H$ such that $\overrightarrow{ra_1}$, $\overrightarrow{rb_1}$, $\overrightarrow{a_2r}$ and $\overrightarrow{b_2r}$ exist in $D$. Then $\{a_1, b_1, a_2, b_2\}$ induces a $C_4$, viz., $\langle a_1, a_2, b_1, b_2, a_1 \rangle$, in $H$ and hence in $G$; which is a contradiction.
	
	Conversely, suppose $G$ does not contain a strict-double-arborescence as its induced subgraph. If $G$ contains a $C_4$ induced by $\{a_1, b_1, a_2, b_2\}$, then by \cite[Proposition 4.2]{dobson2004treelike}, $N_G(a_1) \cap N_G(b_1) \cap N_G(a_2) \cap N_G(b_2)$ induces a complete subgraph of $G$. Let $a$ be a vertex in $N_G(a_1) \cap N_G(b_1) \cap N_G(a_2) \cap N_G(b_2)$, then observe that the graph induced by $\{a, a_1, b_1, a_2, b_2\}$ is a strict-double-arborescence as shown in Fig. \ref{Double-arborescence}; a contradiction. 
	\qed
\end{proof}

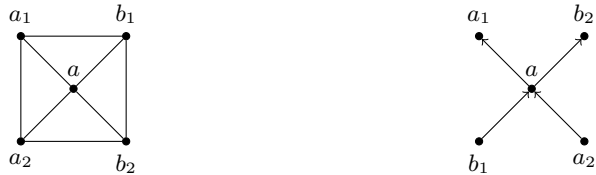
\begin{figure}[t]
	\centering
	\begin{minipage}{.5\textwidth}
		\centering
		\[\begin{tikzpicture}[scale=0.7]	
			\vertex (a_1) at (0,0) [fill=black,label=above:$a$] {};
			\vertex (a_2) at (1,1) [fill=black,label=above:$b_1$] {};
			\vertex (a_3) at (1,-1) [fill=black,label=below:$b_2$] {};
			\vertex (a_4) at (-1,1) [fill=black,label=above:$a_1$] {};
			\vertex (a_5) at (-1,-1) [fill=black,label=below:$a_2$] {};
			
			\path
			(a_1) edge (a_2)
			(a_1) edge (a_3)
			(a_1) edge (a_4)
			(a_1) edge (a_5)
			(a_2) edge (a_4)
			(a_4) edge (a_5)
			(a_3) edge (a_5)
			(a_2) edge (a_3);
		\end{tikzpicture}\] 
	\end{minipage}%
	\begin{minipage}{.5\textwidth}
		\centering	
		\[\begin{tikzpicture}[scale=0.7]
			\vertex (a_1) at (0,0) [fill=black,label=above:$a$] {};
			\vertex (a_2) at (1,1) [fill=black,label=above:$b_2$] {};
			\vertex (a_3) at (1,-1) [fill=black,label=below:$a_2$] {};
			\vertex (a_4) at (-1,1) [fill=black,label=above:$a_1$] {};
			\vertex (a_5) at (-1,-1) [fill=black,label=below:$b_1$] {};
			
			\path[->]
			(a_3) edge (a_1)
			(a_1) edge (a_4);
			\path[->]
			(a_1) edge (a_2)
			(a_5) edge (a_1);
		\end{tikzpicture}\]
	\end{minipage}%
	\caption{Strict-double-arborescence.} \label{Double-arborescence}
\end{figure}

The following corollary is evident from Theorem \ref{P_4-free} and Theorem \ref{C_4-free}.

\begin{corollary} \label{ch_arborescence}
	A graph $G$ is an arborescence if and only if $G$ is a $(C_4, P_4)$-free treelike comparability graph.
\end{corollary}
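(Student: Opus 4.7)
The corollary asserts equivalence of three conditions: $G$ is an arborescence, $G$ is $(C_4, P_4)$-free treelike comparability, and (implicitly via the two preceding theorems) $G$ is a double-arborescence containing no induced strict-double-arborescence. My plan is to chain the two theorems exactly in this order, using the definition that an arborescence is precisely a double-arborescence whose root is a source or a sink.

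For the reverse direction I would start from a $(C_4, P_4)$-free treelike comparability graph $G$. Theorem \ref{P_4-free} immediately gives that $G$ is a double-arborescence. Theorem \ref{C_4-free} then says, because $G$ is a $C_4$-free treelike comparability graph, that $G$ has no induced strict-double-arborescence; in particular $G$ is not itself a strict-double-arborescence. A double-arborescence that is not strict is, by definition, an arborescence, which closes this direction.

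The forward direction needs a bit more work. Suppose $G$ is an arborescence with root $r$; by definition $G$ is a double-arborescence, so Theorem \ref{P_4-free} yields that $G$ is a $P_4$-free treelike comparability graph. It remains to rule out induced $C_4$'s. By Theorem \ref{C_4-free} this reduces to showing that $G$ contains no strict-double-arborescence $H$ as an induced subgraph. Assume for contradiction that such an $H$ exists with root $r_H$. Without loss of generality $r$ is a source (the sink case is symmetric), so in the transitive orientation $D$ of $G$ the relation $\overrightarrow{ab}$ corresponds to ``$a$ is an ancestor of $b$'' in the Hasse-tree rooted at $r$; consequently the set of in-neighbours of any vertex of $G$ is a chain and therefore induces a clique in $G$. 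Applying Lemma \ref{prop_arbor} to $H$ produces two non-adjacent vertices $a_3, a_4$ of $H$ with $\overrightarrow{a_3 r_H}$ and $\overrightarrow{a_4 r_H}$ in the induced orientation. Since $H$ is an induced subgraph, these arrows also live in $D$, so $a_3$ and $a_4$ are both in-neighbours of $r_H$ in $G$, hence adjacent in $G$ and therefore in $H$, contradicting their non-adjacency in $H$.

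The only potential obstacle is the step that identifies the transitive orientation of an arborescence with the ancestor relation of its Hasse tree, and the consequent fact that in-neighbour sets are chains; this is straightforward from the definitions (treelike orientation, source root), and once it is in hand the contradiction with Lemma \ref{prop_arbor} is immediate. The whole argument is therefore a short bookkeeping proof that simply assembles Theorems \ref{P_4-free} and \ref{C_4-free} together with the definition of an arborescence.
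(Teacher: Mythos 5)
Your argument assembles the same two ingredients the paper intends --- the paper's own ``proof'' is the single sentence declaring the corollary evident from Theorems~\ref{P_4-free} and~\ref{C_4-free} --- and your reverse direction is exactly that intended chaining, correctly using that a double-arborescence which is not strict is, by definition, an arborescence. The forward direction is where you add real content, and it contains one step that does not hold up as written: ``Since $H$ is an induced subgraph, these arrows also live in $D$.'' Lemma~\ref{prop_arbor} places $a_3,a_4$ below $r_H$ in the treelike orientation of $H$ \emph{as a graph in its own right}; induced-subgraph-ness only guarantees that the restriction of $D$ to $H$ is \emph{some} transitive orientation of $H$, and a strict-double-arborescence generally admits transitive orientations other than the treelike one and its reversal (for the five-vertex strict-double-arborescence of Fig.~\ref{Double-arborescence}, linearly ordering its three-element quotient with the all-adjacent vertex at the bottom gives a transitive orientation whose Hasse diagram contains a $4$-cycle). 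So the conclusion that $a_3$ and $a_4$ are both in-neighbours of $r_H$ in $D$ does not follow from what you have stated.

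The step is repairable in two ways. First, your own chain observation gives more than you used: if every principal down-set of $P_G$ is a chain, then every element has at most one lower cover; this property passes to the restriction of $P_G$ to the vertex set of $H$, so the Hasse diagram of that restriction is a tree and $D$ restricted to $H$ really is the treelike orientation of $H$ up to reversal --- and reversal is harmless because Lemma~\ref{prop_arbor} supplies a non-adjacent pair on each side of $r_H$, so one of the two pairs lands in the down-set of $r_H$ in $P_G$. Second, and more cheaply, you can bypass the transfer altogether: the orientation-free content of Lemma~\ref{prop_arbor} is that the four vertices it provides induce a $C_4$ in $H$ (this is exactly how the paper proves Theorem~\ref{C_4-free}), and any induced $C_4$ of $G$ inherits from $D$ an orientation alternating sources and sinks, handing you a vertex with two incomparable in-neighbours and contradicting the chain property directly, with no reference to $H$'s own orientation. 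With either repair your proof is correct and is, in substance, the argument the paper leaves implicit.
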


We now characterize the class of $P_4$-free distance-hereditary graphs through the notion of s-leaf-paths in their minimal split-decomposition trees. Let $G$ be a distance-hereditary graph with the minimal split-decomposition tree $T_{\mathcal{F}}$. We call a leaf of a star component in $T_{\mathcal{F}}$ as s-leaf. An s-leaf-path in $T_{\mathcal{F}}$ is an alternated path $P$ such that the endpoints of $P$ are s-leaves of two star components and $P$ does not contain any edge from either of star components (e.g., refer Fig. \ref{fig_5}).

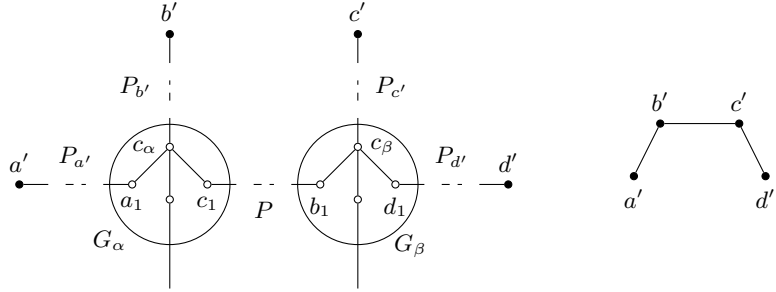
\begin{figure}[t]
	\centering
	\begin{minipage}{.65\textwidth}
		\centering
		\[\begin{tikzpicture}	
			\vertex (a_1) at (0,0) [fill=black,label=above:$a'$] {};
			\node (a_2) at (0.5,0) [] {};
			\node (a_3) at (1,0) [] {};
			\vertex (a_4) at (1.5, 0) [label=below:$a_1$] {};
			\vertex (a_5) at (2, 0.5) [label=left:$c_\alpha$] {};
			\vertex (a_6) at (2.5, 0) [label=below:$c_1$] {};
			\node (a_7) at (3, 0) [] {};
			\node (a_8) at (3.5, 0) [] {};
			\vertex (a_9) at (4, 0) [label=below:$b_1$] {};
			\vertex (a_{10}) at (4.5, 0.5) [label=right:$c_\beta$] {};
			\vertex (a_{11}) at (5, 0) [label=below:$d_1$] {};
			\node (a_{12}) at (5.5, 0) [] {};
			\node (a_{13}) at (6, 0) [] {};
			\vertex (a_{14}) at (6.5, 0) [fill=black,label=above:$d'$] {};
			\node (a_{15}) at (2, 1) [] {};
			\node (a_{16}) at (2, 1.5) [] {};
			\vertex (a_{17}) at (2, 2) [fill=black,label=above:$b'$] {};
			\node (a_{18}) at (4.5, 1) [] {};
			\node (a_{19}) at (4.5, 1.5) [] {};
			\vertex (a_{20}) at (4.5,2) [fill=black,label=above:$c'$] {};
			\vertex (a_{21}) at (2, -0.2) [] {};
			\node (a_{22}) at (2, -1.5) [] {};
			\vertex (a_{23}) at (4.5, -0.2) [] {};
			\node (a_{24}) at (4.5, -1.5) [] {};
			\node (a_{25}) at (1.2, -1.1) [label=above:$G_\alpha$] {};
			\node (a_{26}) at (5.2, -1.2) [label=above:$G_\beta$] {};
			\node (a_{27}) at (0.75,0) [label=above:$P_{a'}$] {};
			\node (a_{28}) at (3.25,0) [label=below:$P$] {};  
			\node (a_{29}) at (5.75,0) [label=above:$P_{d'}$] {};  
			\node (a_{30}) at (2,1.3) [label=left:$P_{b'}$] {};
			\node (a_{31}) at (4.5,1.3) [label=right:$P_{c'}$] {};

			\path
			(a_1) edge (a_2)
			(a_3) edge (a_4)
			(a_5) edge (a_4)
			(a_5) edge (a_6)
			(a_5) edge (a_{15})
			(a_6) edge (a_7)
			(a_8) edge (a_9)
			(a_{10}) edge (a_9)	
			(a_{10}) edge (a_{11})	
			(a_{10}) edge (a_{18})
			(a_{11}) edge (a_{12})
			(a_{13}) edge (a_{14})
			(a_{16}) edge (a_{17})
			(a_{19}) edge (a_{20})
			(a_{21}) edge (a_5)
			(a_{21}) edge (a_{22})
			(a_{10}) edge (a_{23})
			(a_{23}) edge (a_{24})	;
			
			\path [dashed]
			(a_2) edge (a_3)
			(a_7) edge (a_8)
			(a_{12}) edge (a_{13})
			(a_{15}) edge (a_{16})
			(a_{18}) edge (a_{19});
			
			\draw[] (2,0) circle[radius=0.8cm];
			\draw[] (4.5,0) circle[radius=0.8cm];	
		\end{tikzpicture}\] 	
	\end{minipage}%
	\begin{minipage}{.3\textwidth}
		\centering
		
		\[\begin{tikzpicture}[scale=0.7]
			
			\vertex (a_1) at (0,0) [fill=black,label=below:$a'$] {};
			\vertex (a_2) at (0.5,1) [fill=black,label=above:$b'$] {};
			\vertex (a_3) at (2,1) [fill=black,label=above:$c'$] {};
			\vertex (a_4) at (2.5,0) [fill=black,label=below:$d'$] {};
			
			\path
			(a_1) edge (a_2)
			(a_2) edge (a_3)
			(a_4) edge (a_3);
			
		\end{tikzpicture}\]	
	\end{minipage}%
	\caption{A portion of some $T_{\mathcal{F}}$ with an s-leaf-path $P$ and its accessibility graph  $P_4$} \label{fig_5}
\end{figure}

\begin{theorem} \label{lemma_P4}
	Let $G$ be a distance-hereditary graph and $T_{\mathcal{F}}$ be the minimal split-decomposition tree of $G$. Then $G$ contains an induced $P_4$ if and only if there exists an s-leaf-path in $T_{\mathcal{F}}$.
\end{theorem}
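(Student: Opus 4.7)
The plan is to prove both implications by exploiting the clique-star structure of $T_{\mathcal{F}}$ (every split component of a distance-hereditary graph is a clique or a star) together with the isomorphism $T^{A}_{\mathcal{F}} \cong G$ from Theorem~\ref{reduced_cs_tree}\ref{acc_iso}.

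For the ``if'' direction, suppose an s-leaf-path $P$ runs from an s-leaf $u$ of a star component $G_{\alpha}$, with center $c_{\alpha}$, to an s-leaf $v$ of a star component $G_{\beta}$, with center $c_{\beta}$. By Theorem~\ref{reduced_cs_tree}\ref{point3} each star has at least three vertices, hence a second s-leaf; fix $u' \neq u$ in $G_{\alpha}$ and $v' \neq v$ in $G_{\beta}$. By Lemma~\ref{lemma_1}, parts~\ref{maximal_alter_path} and~\ref{point_3}, the four maximal alternated paths starting from $u'$, $c_{\alpha}$, $c_{\beta}$, $v'$ and using no edge of the corresponding star end at four pairwise distinct pendants $a$, $b$, $c$, $d$ respectively. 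I would then show that these four pendants induce a $P_{4}$ with edges $ab, bc, cd$ in $T^{A}_{\mathcal{F}}$. The three required alternated paths are obtained by concatenating the arms with suitable $\mathcal{F}$-edges of the stars and with $P$; for instance, the $bc$-path is the arm from $b$ to $c_{\alpha}$, followed by the $\mathcal{F}$-edge $c_{\alpha}u$, then $P$, then the $\mathcal{F}$-edge $vc_{\beta}$, then the arm from $c_{\beta}$ to $c$, and this concatenation alternates because $P$ uses no star edges (so begins and ends with $T$-edges) while each $\mathcal{F}$-edge is flanked by $T$-edges. For each non-edge among $ac$, $ad$, $bd$, any simple path in $T_{\mathcal{F}}$ between the two pendants is forced to traverse a pair of consecutive $\mathcal{F}$-edges inside a star (for instance $u' - c_{\alpha} - u$ or $v - c_{\beta} - v'$), so no alternated path exists.

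For the ``only if'' direction, let $v_{1}v_{2}v_{3}v_{4}$ be an induced $P_{4}$ in $G$, viewed as four pendants of $T_{\mathcal{F}}$, and let $S$ be the minimal subtree of $T$ containing them. Since $S$ has four leaves, it has at most two branch points (internal vertices of $S$ of degree $\geq 3$). If $S$ has a single branch point $\alpha$, then every pairwise alternated path among the $v_{i}$'s is routed through $G_{\alpha}$ at entry vertices fixed by the tree, and the connectedness of $G$ forces each ``spoke'' to be alternating; hence $G[\{v_{1},\ldots,v_{4}\}]$ is isomorphic to the induced subgraph of $G_{\alpha}$ on those four entry vertices. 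As $G_{\alpha}$ is a clique or a star, this induced subgraph is one of $K_{4}$, $K_{1,3}$, or $\overline{K_{4}}$, none of which is $P_{4}$: contradiction. So $S$ has two branch points $\alpha, \beta$, and the four pendants partition $2+2$ between the two sides. Every crossing alternated path then factors as three local conditions: entry-exit adjacency in $G_{\alpha}$, an alternating middle subpath (the fixed $\alpha$-to-$\beta$ segment of $T_{\mathcal{F}}$, common to all crossings), and entry-exit adjacency in $G_{\beta}$. The partitions $\{v_{1},v_{3}\}\mid\{v_{2},v_{4}\}$ and $\{v_{1},v_{4}\}\mid\{v_{2},v_{3}\}$ are immediately ruled out: the three edges of the $P_{4}$ force all three conditions, which would also realize the alleged non-edge $v_{1}v_{4}$ (respectively $v_{1}v_{3}$). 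The remaining partition $\{v_{1},v_{2}\}\mid\{v_{3},v_{4}\}$ forces, inside $G_{\alpha}$, a $P_{3}$ on the three vertices corresponding to $v_{1}$, $v_{2}$, and the direction toward $\beta$, with $v_{2}$'s vertex in the middle. Since $G_{\alpha}$ is a clique or a star, only a star accommodates such a $P_{3}$: $G_{\alpha}$ is a star centered at $v_{2}$'s vertex, and the $\beta$-direction vertex is therefore an s-leaf. Symmetrically, $G_{\beta}$ is a star with the $\alpha$-direction vertex as an s-leaf. The middle subpath between these two s-leaves uses no edge of either star and is alternating (it sits inside the alternated path realizing the edge $v_{2}v_{3}$), hence is an s-leaf-path.

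The main obstacle will be the two-branch-point case analysis: ruling out the two ``crossing'' partitions relies on the observation that the middle subpath is shared by every crossing, so a condition that succeeds for an edge of the $P_{4}$ also succeeds for the putative non-edge. The hypothesis that $G$ is distance-hereditary is essential at the final step, since only a star component can accommodate a $P_{3}$ with a designated middle vertex.
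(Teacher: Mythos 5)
Your ``if'' direction is essentially the paper's own argument: the same four maximal alternated paths (one from a second s-leaf and one from the center of each of the two stars), the same concatenations through the s-leaf-path to realize the three edges, and the same ``two consecutive star edges'' obstruction for the three non-edges. Your ``only if'' direction, however, takes a genuinely different route. The paper works with the three alternated paths realizing the edges of the $P_4$, locates the two components where they diverge, shows each is a star (else the missing edge $a'c'$ would be present), and then must exclude the ordering in which the two centers face each other along the middle path; it does so by invoking Theorem~\ref{lemma_C4} to produce a $C_4$ on the four pendants, contradicting inducedness. You instead take the Steiner tree of the four pendants in the underlying tree $T$, case-split on its branch points, and use the factorization of ``crossing'' adjacency into independent local conditions (legs, entry--exit adjacency inside each branch component, a shared middle segment) to dispose of the one-branch-point case and of the two crossing $2{+}2$ partitions; the star structure then falls out of induced $P_3$'s in the two branch components. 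This buys a self-contained argument (no appeal to the $C_4$ characterization) at the cost of a longer case analysis, and it rests on the standard but unstated fact that an alternated path between two pendants must follow the unique $T$-path and use exactly one $\mathcal{F}$-edge in each traversed component --- the same fact the paper also uses informally. Two small points to tighten: the spokes are forced to be alternating not by connectedness of $G$ but because each $v_i$ is an endpoint of an edge of the $P_4$ whose realizing alternated path traverses that entire spoke; and in the degenerate case where $\alpha$ and $\beta$ are adjacent in $T$ your middle subpath is a single marked edge, which should be checked against (and does satisfy) the definition of an s-leaf-path.
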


\begin{proof}
	 Since $G$ is isomorphic to the accessibility graph $T_{\mathcal{F}}^A$ (see Theorem \ref{reduced_cs_tree}\ref{acc_iso}), we prove that $T_\mathcal{F}^A$ contains an induced $P_4$ if and only if there exists an s-leaf-path in $T_{\mathcal{F}}$. Let $P$ be an s-leaf-path between two star components $G_\alpha$ and $G_\beta$ in $T_{\mathcal{F}}$. Let $c_1 \in G_\alpha$, $b_1 \in G_\beta$ be the endpoints of $P$. Since $T_{\mathcal{F}}$ is the minimal split-decomposition tree, both $G_\alpha$ and $G_\beta$ have at least three vertices each (see Theorem \ref{reduced_cs_tree}\ref{point3}). Thus, there are at least two s-leaves in each of the star components $G_\alpha$ and $G_\beta$. Then, by Lemma \ref{lemma_1}\ref{maximal_alter_path}, there are at least two maximal alternated paths from $G_\alpha$ that do not use any edge of $G_\alpha$ and, by Lemma \ref{lemma_1}\ref{point_3}, these maximal alternated paths end at distinct pendant vertices of $T_{\mathcal{F}}$. Of these paths, suppose one path, say $P_{a'}$, is from an s-leaf $a_1$ in $G_\alpha$ to a pendant vertex $a'$ of $T_{\mathcal{F}}$, and the other path, say $P_{b'}$, is from the center $c_\alpha$ of $G_\alpha$ to a pendant vertex $b'$ of $T_{\mathcal{F}}$. Similarly, there are at least two maximal alternated paths from $G_\beta$ one, say $P_{c'}$, is from its center $c_\beta$ to a pendant vertex $c'$ of $T_{\mathcal{F}}$, and the other, say $P_{d'}$, is from an s-leaf, say $d_1$, in $G_\beta$ to a pendant vertex $d'$ of $T_{\mathcal{F}}$, as shown in Fig. \ref{fig_5}.
	
	We show that $\langle a', b', c', d' \rangle$ is an induced path in $T_\mathcal{F}^A$. As shown in Fig. \ref{fig_5}, note that the path consisting of $P_{a'}$ followed by the edge $\overline{a_1c_\alpha}$ and then $P_{b'}$ is an alternated path from $a'$ to $b'$ in $T_{\mathcal{F}}$ so that $a'$ and $b'$ are adjacent in $T_\mathcal{F}^A$. Further, the path consisting of $P_{b'}$ followed by the edge $\overline{c_\alpha c_1}$, the path $P$, the edge $\overline{b_1c_\beta}$, then the path $P_{c'}$ is an alternated path from $b'$ to $c'$ in $T_{\mathcal{F}}$. Thus, $b'$ and $c'$ are adjacent in $T_\mathcal{F}^A$. Similarly, the vertices $c'$ and $d'$ are adjacent in $T_\mathcal{F}^A$.
	Consider the underlying tree $T$ of $T_{\mathcal{F}}$ and note that there is a unique path between $a'$ and $c'$ in $T$ that should pass through the vertices $\alpha$ and $\beta$ of $T$. Thus, any path between $a'$ and $c'$ in $T_{\mathcal{F}}$ should pass through $G_\alpha$ and $G_\beta$, and by construction, that path must use two edges of $G_\alpha$ so that it cannot be an alternated path. This shows that $a'$ and $c'$ are not adjacent in $T_\mathcal{F}^A$. Similarly, one can show that $\overline{b'd'}$ and $\overline{a'd'}$ are not in $T_\mathcal{F}^A$. Hence, $T_\mathcal{F}^A$ contains an induced $P_4$. 
	
	Conversely, suppose $T_\mathcal{F}^A$ has an induced $P_4$, say $\langle a', b', c', d'\rangle$. Since $\overline{b'a'}$, $\overline{b'c'}$ are edges of $T_\mathcal{F}^A$, there exist alternated paths, say $P_{b', a'}$ and $P_{b', c'}$, which begin at the pendant vertex $b'$ of $T_{\mathcal{F}}$ and end at the pendant vertices $a'$ and $c'$ of $T_{\mathcal{F}}$, respectively. Similarly, there exists an alternated path $P_{c', d'}$ between the pendant vertices $c'$ and $d'$ of $T_{\mathcal{F}}$. 
	
	Let $G_\alpha$ be the split component of $T_{\mathcal{F}}$ until which the paths $P_{b', a'}$ and $P_{b', c'}$ have the common segment and they split thereafter. Let $c_\alpha \in G_\alpha$ be the vertex at which the common segment ends, and $a_1$ and $c_1$ be the vertices of $G_\alpha$ at which the paths $P_{b', a'}$ and $P_{b', c'}$ exit $G_\alpha$, respectively. 
	We now show that $a_1$ and $c_1$ are not adjacent in $G_\alpha$ so that $G_\alpha$ is a star component. Otherwise, we can have an alternated path between $a'$ and $c'$, viz., $P_{a', a_1}$ followed by the edge $\overline{a_1c_1}$ of $G_\alpha$ and then $P_{c_1, c'}$, where $P_{a', a_1}$ is the segment of $P_{b', a'}$ between $a_1$ and $a'$, and  $P_{c_1, c'}$ is the segment of $P_{b', c'}$ between $c_1$ and $c'$. Thus, $a'$ and $c'$ are adjacent in $T_\mathcal{F}^A$; a contradiction. Hence, $G_\alpha$ must be a star component. It is evident that $c_\alpha$ is its center and $a_1$, $c_1$ are its s-leaves. 
	
	Let $G_\beta$ be the split component of $T_{\mathcal{F}}$ until which the paths $P_{c', d'}$ and $P_{b', c'}$ have the common segment and they split thereafter. Let $c_\beta \in G_\beta$ be the vertex at which the common segment ends, and $b_1$ and $d_1$ be the vertices of $G_\beta$ at which the paths $P_{b', c'}$ and $P_{c', d'}$ exit $G_\beta$, respectively. As shown above, observe that $G_\beta$ is a star component, in which $c_\beta$ is the center and $b_1, d_1$ are s-leaves.
	
	Note that the alternated path $P_{b', c'}$ passes through the split components $G_\alpha$ and $G_\beta$. We observe that the pendant vertex $b'$ is nearer to $G_\alpha$ than $G_\beta$ on the path $P_{b', c'}$. On the contrary, suppose $G_\beta$ is nearer to $b'$. Then, clearly, $G_\alpha$ would be nearer to $c'$. Since the path $P_{b', c'}$ exits $G_\alpha$ at $c_1$ towards the pendant vertex $c'$, the vertex $c_1$ is closest in $G_\alpha$ to $c'$. Similarly, the vertex $b_1$ is closest in $G_\beta$ to $b'$. Accordingly, the vertices $b', c', c_1, b_1, c_\alpha, c_\beta$ will appear in the following sequence on the path $P_{b', c'}$: $$b', b_1, c_\beta, c_\alpha, c_1, c'$$
	Thus, the segment of $P_{b', c'}$ between $c_\alpha$ and $c_\beta$ is a center-center path in $T_{\mathcal{F}}$, as shown in Fig. \ref{fig_6}. Then by Theorem \ref{lemma_C4}, $\langle a', b', c', d', a' \rangle$ forms a $C_4$ in $T_\mathcal{F}^A$; a contradiction to induced $P_4$ over these vertices. Hence, the pendant vertex $b'$ is nearer to $G_\alpha$ than $G_\beta$ on the path $P_{b', c'}$ so that the vertices $b', c', c_1, b_1, c_\alpha, c_\beta$ will appear in the following sequence on the path $P_{b', c'}$: $$b', c_\alpha, c_1, b_1, c_\beta, c'$$
	Evidently, the segment of $P_{b', c'}$ between $c_1$ and $b_1$ is an s-leaf-path between $G_\alpha$ and $G_\beta$ in $T_{\mathcal{F}}$. 
	\qed	
\end{proof}

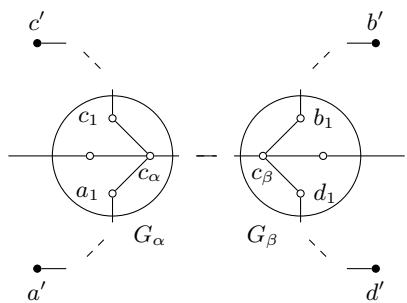
\begin{figure}[t]
	\centering
	\begin{minipage}{.5\textwidth}
		\centering
		\[\begin{tikzpicture}	
			\vertex (a_1) at (-1,1.5) [fill=black,label=above:$c'$] {};
			\node (a_2) at (-0.5,1.5) [] {};
			\node (a_3) at (0,1) [] {};
			\vertex (a_4) at (0, 0.5) [label=left:$c_1$] {};
			\vertex (a_5) at (0.5, 0) [label=below:$c_\alpha$] {};
			\vertex (a_6) at (0, -0.5) [label=left:$a_1$] {};
			\node (a_7) at (0, -1) [] {};
			\node (a_8) at (-0.5,-1.5) [] {};
			\vertex (a_9) at (-1,-1.5) [fill=black,label=below:$a'$] {};
			\node (a_{10}) at (1, 0) [] {};
			\node (a_{11}) at (1.5, 0) [] {};
			\vertex (a_{12}) at (2, 0) [label=below:$c_\beta$] {};
			\vertex (a_{13}) at (2.5, 0.5) [label=right:$b_1$] {};
			\node (a_{14}) at (2.5, 1) [] {};
			\node (a_{15}) at (3, 1.5) [] {};
			\vertex (a_{16}) at (3.5, 1.5) [fill=black,label=above:$b'$] {};
			\vertex (a_{17}) at (2.5, -0.5) [label=right:$d_1$] {};
			\node (a_{18}) at (2.5, -1) [] {};
			\node (a_{19}) at (3, -1.5) [] {};
			\vertex (a_{20}) at (3.5,-1.5) [fill=black,label=below:$d'$] {};
				\vertex (a_{21}) at (-0.3,0) [] {};
				\node (a_{22}) at (-1.5, 0) [] {};
				\vertex (a_{23}) at (2.8,0) [] {};
				\node (a_{24}) at (4, 0) [] {};
			\node (a_{25}) at (0.5, -0.7) [label=below:$G_\alpha$] {};
			\node (a_{26}) at (2, -0.7) [label=below:$G_\beta$] {};
			\path
			(a_1) edge (a_2)
			(a_3) edge (a_4)
			(a_5) edge (a_4)
			(a_5) edge (a_6)
			(a_5) edge (a_{10})
			(a_5) edge (a_{21})
			(a_6) edge (a_7)
			(a_8) edge (a_9)
				
			(a_{10}) edge (a_{11})	
			(a_{23}) edge (a_{12})
			(a_{11}) edge (a_{12})	
			(a_{13}) edge (a_{12})
			(a_{17}) edge (a_{12})
			(a_{13}) edge (a_{14})

			(a_{16}) edge (a_{15})
			(a_{18}) edge (a_{17})
			(a_{23}) edge (a_{24})
			(a_{19}) edge (a_{20})
			(a_{22}) edge (a_{21})	;
			
			\path [dashed]
			(a_2) edge (a_3)
			(a_7) edge (a_8)
			(a_{10}) edge (a_{11})
			(a_{15}) edge (a_{14})
			(a_{18}) edge (a_{19});
			
			\draw[] (0,0) circle[radius=0.8];
			\draw[] (2.5,0) circle[radius=0.8cm];
	
		\end{tikzpicture}\] 	
	\end{minipage}%
	\caption{A center-center path.} \label{fig_6}
\end{figure}

Using the split decomposition, we now present an alternative proof  for the well-known characterization of arborescences given in \cite[Theorem 3]{wolk1965note} (also see \cite{wolk1962comparability}).

\begin{theorem}\label{alter_proof}
	A graph $G$ is an arborescence if and only if $G$ is $(C_4, P_4)$-free.
\end{theorem}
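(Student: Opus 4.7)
The plan is to reduce the theorem to Corollary \ref{ch_arborescence} by showing that every $(C_4, P_4)$-free graph is a treelike comparability graph, using split-decomposition arguments. The forward direction is then immediate from Corollary \ref{ch_arborescence}: any arborescence is a $(C_4, P_4)$-free treelike comparability graph and in particular $(C_4, P_4)$-free. For the converse, assume $G$ is $(C_4, P_4)$-free; I will verify each of the three conditions of Theorem \ref{Treelike_ch} for the minimal split-decomposition tree $T_{\mathcal{F}}$ of $G$.

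First, invoking the well-known fact that $P_4$-free graphs are cographs and hence distance-hereditary, Bouchet's characterization (quoted earlier in the paper) ensures that $T_{\mathcal{F}}$ is a clique-star tree, which yields condition (i). Second, Theorem \ref{lemma_C4} applied to the $C_4$-free graph $G$ implies that $T_{\mathcal{F}}$ has no center-center path. Since a single marked edge between the centers of two star components is itself a (length-one) center-center alternated path, this immediately forces condition (iii).

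The substantive step is condition (ii): each clique component has at most two marked vertices. Suppose for contradiction that some clique component $G_{\alpha}$ contains three marked vertices $m_1, m_2, m_3$. Each $m_i$ is joined by a marked edge to a split component $G_{\beta_i}$; by condition (iv) of Theorem \ref{reduced_cs_tree}, $G_{\beta_i}$ cannot be a clique, and since $T_{\mathcal{F}}$ is a clique-star tree, each $G_{\beta_i}$ must be a star. Moreover, the components $G_{\beta_1}, G_{\beta_2}, G_{\beta_3}$ are distinct, as they correspond to distinct edges of the underlying tree. Let $v_i$ be the endpoint of the marked edge $\overline{m_i v_i}$ inside $G_{\beta_i}$, so each $v_i$ is either the center or an s-leaf of $G_{\beta_i}$. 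By the pigeonhole principle, at least two of these endpoints, say $v_i$ and $v_j$, are of the same type. The path $\langle v_i, m_i, m_j, v_j \rangle$ consists of a marked edge, the $\mathcal{F}$-edge $\overline{m_i m_j}$ of $G_{\alpha}$, and another marked edge, so it alternates between $T$-edges and $\mathcal{F}$-edges and contains no edge of $G_{\beta_i}$ or $G_{\beta_j}$. If $v_i$ and $v_j$ are both s-leaves, this path is an s-leaf-path, contradicting $P_4$-freeness via Theorem \ref{lemma_P4}; if they are both centers, it is a center-center path, contradicting $C_4$-freeness via Theorem \ref{lemma_C4}. Either way a contradiction ensues, establishing condition (ii). Hence $G$ is a treelike comparability graph by Theorem \ref{Treelike_ch}, and Corollary \ref{ch_arborescence} then yields that $G$ is an arborescence.

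The main obstacle is the pigeonhole case analysis for condition (ii); the key point is that \emph{both} forbidden alternated configurations (s-leaf-path and center-center path) arise simultaneously from the same three-edge path $\langle v_i, m_i, m_j, v_j \rangle$, so that having three marked vertices in a single clique component is incompatible with $(C_4, P_4)$-freeness regardless of how the marked endpoints sit within their star components. The remaining steps are clean assembly of Theorems \ref{Treelike_ch}, \ref{lemma_C4}, \ref{lemma_P4}, and Corollary \ref{ch_arborescence}.
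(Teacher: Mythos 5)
Your proof is correct and follows essentially the same route as the paper: reduce to Theorem \ref{Treelike_ch} and Corollary \ref{ch_arborescence}, invoke Theorems \ref{lemma_C4} and \ref{lemma_P4} to exclude center-center and s-leaf paths, and derive a contradiction from a clique component with three marked vertices (your pigeonhole phrasing is a slightly cleaner version of the paper's case split). The only cosmetic difference is that you obtain the clique-star property via ``$P_4$-free $\Rightarrow$ cograph $\Rightarrow$ distance-hereditary'' and Bouchet's characterization, whereas the paper routes through the forbidden-subgraph characterization of distance-hereditary \emph{comparability} graphs; both are valid.
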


\begin{proof}
	Suppose $G$ is an arborescence. Then, by Corollary \ref{ch_arborescence}, $G$ is a $(C_4, P_4)$-free graph.
	
	Conversely, suppose $G$ is a $(C_4, P_4)$-free graph. Using Theorem \ref{Treelike_ch}, we show that $G$ is a treelike comparability graph so that it is an arborescence (again by Corollary \ref{ch_arborescence}). Note that $G$ does not contain any of the graphs given in Fig. \ref{fig_8}, as each of them has $C_4$ or $P_4$ as an induced subgraph. Thus, by \cite[Theorem 3.1]{Stefano_2012}, $G$ is a distance-hereditary comparability graph so that the minimal split-decomposition tree $T_{\mathcal{F}}$ of $G$ is a clique-star tree. As $G$ is $(C_4, P_4)$-free, note that $T_{\mathcal{F}}$ has neither a center-center path (by Theorem \ref{lemma_C4}) nor an s-leaf-path (by Theorem \ref{lemma_P4}). In particular, there is no marked edge between the centers of two star components; otherwise, we will have a center-center path in $T_{\mathcal{F}}$. 
	
	Finally, we claim that each clique component of $T_{\mathcal{F}}$ has at most two marked vertices. On the contrary, suppose there is a clique component in $T_{\mathcal{F}}$ with three marked vertices say $a_1$, $a_2$ and $a_3$. For $1 \le i \le 3$, let $e_i$ be the marked edge in $T_{\mathcal{F}}$ with one end point $a_i$ and the other end point, say, $b_i$.  Since  $T_{\mathcal{F}}$ is the minimal split-decomposition tree of $G$, by statement (iv) of Theorem \ref{reduced_cs_tree}, each $b_i$ ($1 \le i \le 3$) is a vertex of a star component. Note that not all $b_i$'s can be centers of the respective star components; otherwise, $T_\mathcal{F}$ will have a center-center path, e.g., $\langle b_1, a_1, a_2, b_2\rangle$. Also, not all $b_i$'s can be s-leaves of the respective star components; otherwise, $T_\mathcal{F}$ will have an s-leaf-path, e.g., $\langle b_1, a_1, a_2, b_2\rangle$. Hence, one of $b_i$'s will be the center (or a s-leaf), without loss of generality assume that it is $b_1$, and the other two will be s-leaves (or the centers, respectively) of the respective star components. In any case, $\langle b_2, a_2, a_3, b_3\rangle$ is an s-leaf-path or center-center path in $T_\mathcal{F}$, which is not possible in $T_\mathcal{F}$. Thus, there will be at most two marked vertices in any clique component of $T_\mathcal{F}$. Hence, by Theorem \ref{Treelike_ch}, $G$ is a treelike comparability graph.  \qed 
	\end{proof}

\begin{figure}[t]
	\centering
	\begin{minipage}{.2\textwidth}
		\centering
		\[\begin{tikzpicture}[scale=0.6]
			
			\vertex (1) at (0,0) [fill=black] {};
			\vertex (2) at (0,1.1) [fill=black] {};
			\vertex (3) at (1.1,0) [fill=black] {};
			\vertex (4) at (1.1,1.1) [fill=black] {};
			\vertex (5) at (0.55,1.9) [fill=black] {};
			
			\path 
			(1) edge (2)
			(1) edge (3)
			(2) edge (4)
			(3) edge (4)
			(2) edge (5)
			(4) edge (5);

		\end{tikzpicture}\]	
		House
	\end{minipage}%
	\begin{minipage}{.2\textwidth}
		\centering
		
		\[\begin{tikzpicture}[scale=0.6]

			\vertex (1) at (0,0) [fill=black] {};
			\vertex (2) at (0.7,0) [fill=black] {};
			\vertex (3) at (1.4,-0.2) [fill=black] {};
			\vertex (4) at (-0.7,-0.2) [fill=black] {};
			\vertex (5) at (0.35,-1.6) [fill=black] {};
			
			\path 
			(4) edge (1)
			(1) edge (2)
			(2) edge (3)
			(5) edge (1)
			(5) edge (2)
			(5) edge (3)
			(5) edge (4);

		\end{tikzpicture}\] 
		Gem 
	\end{minipage}%
	\begin{minipage}{.2\textwidth}
		\centering
		
		\[\begin{tikzpicture}[scale=0.6]

			\vertex (a_1) at (0,-0.5) [fill=black] {};
			\vertex (a_2) at (-1,-0.5) [fill=black] {};
			\vertex (a_3) at (1,-0.5) [fill=black] {};
			\vertex (a_4) at (0,0.5) [fill=black] {};
			\vertex (a_5) at (-1,0.5) [fill=black] {};
			\vertex (a_6) at (1,0.5) [fill=black] {};

			\path
			(a_1) edge (a_2)
			(a_1) edge (a_3)
			(a_1) edge (a_4)
			(a_2) edge (a_5)
			(a_3) edge (a_6)
			(a_4) edge (a_5)
			(a_4) edge (a_6)
			
			;

		\end{tikzpicture}\]
		Domino
	\end{minipage}
	
	\begin{minipage}{.2\textwidth}
		\centering
		
		\[\begin{tikzpicture}[scale=0.6]

			\vertex (a_1) at (0.7,0) [fill=black] {};
			\vertex (a_2) at (1.3,1) [fill=black] {};
			\vertex (a_3) at (0,2) [fill=black] {};
			\vertex (a_4) at (-1.2,1) [fill=black] {};
			\vertex (a_5) at (-0.7,0) [fill=black] {};

			\path
			(a_1) edge (a_2)
			(a_2) edge (a_3)
			(a_3) edge (a_4)
			(a_4) edge (a_5);
			
			\path [dashed]
			(a_1) edge (a_5);

		\end{tikzpicture}\]
		$C_n$,$n \geq 5$
		
	\end{minipage}
	\begin{minipage}{.2\textwidth}
		\centering
		
		\[\begin{tikzpicture}[scale=0.6]

			\vertex (1) at (-1,0) [fill=black] {};
			\vertex (2) at (0,0) [fill=black] {};
			\vertex (3) at (-0.5,1) [fill=black] {};
			\vertex (4) at (-0.5,1.7) [fill=black] {};
			\vertex (5) at (-1.5,-0.5) [fill=black] {};
			\vertex (6) at (0.5,-0.5) [fill=black] {};

			\path 
			(1) edge (2)
			(2) edge (3)
			(3) edge (1)
			(3) edge (4)
			(1) edge (5)
			(2) edge (6);

		\end{tikzpicture}\]
		Net
		
	\end{minipage}
	
	\caption{Forbidden induced subgraphs for distance-hereditary comparability graphs.} \label{fig_8}
\end{figure}
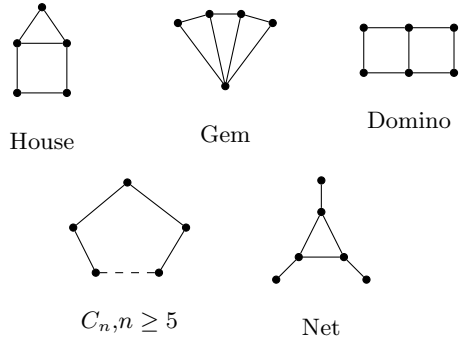

We consolidate the following characterizations of double-arborescences and arborescences in terms of their minimal split-decomposition trees, as a consequence of the work presented so far.

\begin{corollary}\label{ch_double_arb}
	Let $G$ be a graph and $T_{\mathcal{F}}$ be its minimal split-decomposition tree. Then $G$ is a double-arborescence if and only if the following statements hold.
	\begin{enumerate}[label=\rm (\roman*)]
		\item $T_{\mathcal{F}}$ is a clique-star tree.
		\item Each clique component has at most two marked vertices.
		\item There is no marked edge between the centers of two star components. 
		\item $T_{\mathcal{F}}$ does not have any s-leaf-path.
	\end{enumerate}
\end{corollary}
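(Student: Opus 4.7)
The plan is to assemble this corollary directly from the three main prior results: Theorem \ref{P_4-free} (double-arborescence $=$ $P_4$-free treelike comparability graph), Theorem \ref{Treelike_ch} (structural characterization of treelike comparability graphs via $T_\mathcal{F}$), and Theorem \ref{lemma_P4} (containing an induced $P_4$ corresponds to the presence of an s-leaf-path). No fresh combinatorial argument is needed; the work is in verifying that these pieces stack cleanly.

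For the forward direction, suppose $G$ is a double-arborescence. By Theorem \ref{P_4-free}, $G$ is a $P_4$-free treelike comparability graph. Applying Theorem \ref{Treelike_ch} to the treelike part immediately yields (i), (ii), and (iii). To get (iv), I note that every treelike comparability graph is distance-hereditary (as remarked in Section \ref{split-dec-trees}), so Theorem \ref{lemma_P4} applies: since $G$ has no induced $P_4$, $T_\mathcal{F}$ has no s-leaf-path.

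For the converse, suppose $T_\mathcal{F}$ satisfies (i)--(iv). Conditions (i)--(iii) together with Theorem \ref{Treelike_ch} give that $G$ is a treelike comparability graph. In particular, $G$ is distance-hereditary, so Theorem \ref{lemma_P4} applies, and condition (iv) forces $G$ to be $P_4$-free. Then Theorem \ref{P_4-free} concludes that $G$ is a double-arborescence.

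The only potential subtlety — and the one step that deserves an explicit line in the write-up — is the invocation of Theorem \ref{lemma_P4}, since that theorem is stated for distance-hereditary graphs and we must justify that both sides of the equivalence lie in that class. On the ``double-arborescence'' side this is immediate from Theorem \ref{Treelike_ch} together with the fact that treelike comparability graphs form a subclass of distance-hereditary graphs; on the ``$T_\mathcal{F}$ satisfies (i)--(iii)'' side, the clique-star structure of $T_\mathcal{F}$ alone already certifies distance-hereditariness by the result of Bouchet \cite{Bouchet_1} cited in Section \ref{split-dec-trees}. Beyond this observation, the proof is a short two-paragraph chain of implications.
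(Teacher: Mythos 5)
Your proposal is correct and follows essentially the same route as the paper's own proof: both directions are assembled from Theorem \ref{P_4-free}, Theorem \ref{Treelike_ch}, and Theorem \ref{lemma_P4}, with the same care taken to certify distance-hereditariness (via the clique-star structure of $T_\mathcal{F}$) before invoking Theorem \ref{lemma_P4}. No substantive difference from the paper's argument.
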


\begin{proof}
	Suppose $G$ is a double-arborescence. Then, by Theorem \ref{P_4-free}, we have $G$ is a $P_4$-free treelike comparability graph. Thus,  we have the statements (i), (ii) and (iii), using Theorem \ref{Treelike_ch}. Further, since $T_{\mathcal{F}}$ is a clique-star tree (statement (i)), it is evident that $G$ is a distance-hereditary graph. Hence, we have $G$ is a $P_4$-free distance-hereditary graph so that the statement (iv) holds by Theorem \ref{lemma_P4}.
	
	Conversely, using the first three statements, we have $G$ is a treelike comparability graph, by Theorem \ref{Treelike_ch}.  Further, in view of Theorem \ref{lemma_P4},  $G$ is $P_4$-free distance-hereditary graph using the statements (i) and (iv). Hence, $G$ is $P_4$-free treelike comparability graph, i.e., $G$ is a double-arborescence by Theorem \ref{P_4-free}. \qed 
\end{proof}

\begin{corollary}\label{main_th}
	Let $G$ be a graph and $T_{\mathcal{F}}$ be its minimal split-decomposition tree. Then $G$ is an arborescence if and only if the following statements hold.
	\begin{enumerate}[label=\rm (\roman*)]
		\item $T_{\mathcal{F}}$ is a clique-star tree.
		\item $T_{\mathcal{F}}$ does not have any center-center path. 
		\item $T_{\mathcal{F}}$ does not have any s-leaf-path.
	\end{enumerate}
\end{corollary}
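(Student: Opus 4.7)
The plan is to derive Corollary \ref{main_th} as a direct consequence of Theorem \ref{alter_proof} (the $(C_4, P_4)$-free characterization of arborescences) combined with the two structural characterizations already proved in terms of split-decomposition trees: Theorem \ref{lemma_C4} for $C_4$-freeness and Theorem \ref{lemma_P4} for $P_4$-freeness. The bridging ingredient is Bouchet's result stating that a connected graph is distance-hereditary if and only if its minimal split-decomposition tree is a clique-star tree.

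For the forward direction, I would assume $G$ is an arborescence. By Theorem \ref{alter_proof}, $G$ is $(C_4, P_4)$-free. Since arborescences are treelike comparability graphs and every treelike comparability graph is distance-hereditary (as noted in the paragraph preceding Theorem \ref{lemma_C4}), Bouchet's theorem yields statement (i), namely that $T_{\mathcal{F}}$ is a clique-star tree. Applying Theorem \ref{lemma_C4} to the $C_4$-free distance-hereditary graph $G$ gives statement (ii), and applying Theorem \ref{lemma_P4} to the $P_4$-free distance-hereditary graph $G$ gives statement (iii).

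For the converse, suppose (i), (ii), (iii) all hold. Statement (i) together with Bouchet's theorem guarantees that $G$ is distance-hereditary. Then Theorem \ref{lemma_C4} combined with (ii) yields that $G$ is $C_4$-free, while Theorem \ref{lemma_P4} combined with (iii) yields that $G$ is $P_4$-free. Thus $G$ is $(C_4, P_4)$-free, and Theorem \ref{alter_proof} concludes that $G$ is an arborescence.

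Since every ingredient has already been established in the excerpt, there is no real obstacle; the argument is a clean assembly of the three preceding results. The only point that requires a word of care is to ensure that the distance-hereditary hypothesis needed in Theorems \ref{lemma_C4} and \ref{lemma_P4} is supplied correctly in each direction, which is handled by statement (i) via Bouchet's characterization.
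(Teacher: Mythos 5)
Your proposal is correct and follows essentially the same route as the paper's own proof: both directions are assembled from Theorem \ref{alter_proof} together with Theorems \ref{lemma_C4} and \ref{lemma_P4}, with the clique-star-tree condition supplying the distance-hereditary hypothesis. The only cosmetic difference is that the paper obtains statement (i) in the forward direction via Theorem \ref{Treelike_ch} rather than by invoking Bouchet's characterization directly, which is immaterial.
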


\begin{proof}
	Suppose $G$ is an arborescence. Then, by Theorem \ref{alter_proof}, we have $G$ is a $(C_4, P_4)$-free graph. Since $G$ is a treelike comparability graph, the statement (i) holds (by Theorem \ref{Treelike_ch}). Further, using the fact that every treelike comparability graph is a distance-hereditary graph, we have $G$  is a $(C_4, P_4)$-free  distance-hereditary graph so that the statements (ii) and (iii) hold by Theorem \ref{lemma_C4} and Theorem \ref{lemma_P4}, respectively. 
	
	For proof of the converse, first note that the statement (i) implies that $G$ is distance-hereditary. Thus, using statements (i) and (ii), $G$ is $C_4$-free (by Theorem \ref{lemma_C4}) and using statements (i) and (iii), we have $G$ is $P_4$-free (by Theorem \ref{lemma_P4}). Hence, $G$ is a $(C_4, P_4)$-free graph so that $G$ is an arborescence (by Theorem \ref{alter_proof}). \qed
\end{proof}

\section{Minimum-Word-Representants} 

In \cite{Bouchet_1}, it was established that the class of distance-hereditary graphs are circle graphs. Thus, being a subclass of distance-hereditary graphs, the  double-arborescences are 2-word-representable. Hence, if $G$ is a double-arborescence on $n$ vertices, the length of its minimum-word-representants $\ell(G) \le 2n$. In this section, we devise an algorithm to find  minimum-word-representants of arborescences and extend it to double-arborescences. Moreover, contributing a class to the open problem in \cite{Eshwar_2024}, we prove that for a double-arborescence $G$ on $n$ vertices with clique number $k$, $\ell(G) = 2n - k$. We refer to \cite{Trotterbook}, for the concepts and notation related to posets used in this section.

\subsection{Word Construction}

Let $G = (V, E)$ be an arborescence on $n$ vertices with clique number $k$. Note that a maximum size clique in a comparability graph can be found in quadratic time \cite{Even_1972}. Suppose $C_{\text{long}}$ is a maximum clique in $G$. As $G$ is a treelike comparability graph, the treelike orientation of $G$ as well as the transitive reduction (hence, the Hasse diagram) can be found in linear time \cite[Theorem 5]{cornelsen2009treelike}. Recall that the Hasse diagram of the induced poset $P_G$ with respect to the arborescence orientation of $G$ is a rooted tree and the root $r$ is the least element or the greatest element of $P_G$. Without loss of generality, we assume that $r$ is the greatest element. Note that the elements of $C_{\text{long}}$ form a longest chain in $P_G$ and $r \in C_{\text{long}}$. In what follows, let $\prec$ be the partial order on the poset $P_G$ and $\prec:$ be the corresponding covering relation, i.e., $a \prec: b$ means $a \prec b$ and there is no element between $a$ and $b$.

In Algorithm \ref{algo-1}, we give a procedure based on breadth-first search (BFS) of $P_G$ to construct a word of minimum length representing $G$. For BFS, one may refer to \cite{cormen}. In what follows, $w$ refers to the output of Algorithm \ref{algo-1}. We have the following remarks on $w$.

\begin{algorithm}[!htb]
	\caption{Constructing minimum-word-representant of an arborescence.}
	\label{algo-1}
	\KwIn{The Hasse diagram $P_G$ of an arborescence $G$ and a longest chain $C_{\text{long}}$ of $P_G$.}
	\KwOut{A word $w$ representing $G$.}
	Initialize $w$ with the root $r$.\\
	Append $r$ in $Q$ (where $Q$ is the queue used in BFS)\\ 	
	\While{$Q$ is not empty}{
		Remove the first element of $Q$, say $a$. \\
		\If{$a$ is not a leaf}{
			Let $a_1, a_2, \ldots, a_t$ be the children of $a$ and append them in $Q$.\\
			\If{$a \in C_{\text{long}}$}{Without loss of generality let $a_1 \in C_{\text{long}}$ and update $w$ by $a_1a_2 \cdots a_twa_ta_{t-1} \cdots a_2$.}
			\Else{Replace the first occurrence of $a$ in $w$ by $a_1a_2 \cdots a_ta$ and the second occurrence of $a$ in $w$ by $a_ta_{t-1} \cdots a_1a$.}
		}
	}
\Return $w$.
\end{algorithm}

\begin{remark}\label{len_w}
	If $G$ is an arborescence on $n$ vertices with clique number $k$, the length of the word $w$ is $2n - k$, as each element of $C_{\text{long}}$ appears exactly once and the remaining elements of $V$ appear twice in $w$. 
\end{remark}

\begin{remark}\label{structure_w}
	The word $w$ is of the form $w_1rw_2$ such that the elements of $C_{\text{long}}$ appear exactly once in $w_1r$ and each element not in $C_{\text{long}}$ has one occurrence  in $w_1$ and the other occurrence in $w_2$.
\end{remark}

We now prove that the word $w$ represents the arborescence $G$ through the following lemmas.

\begin{lemma}\label{cor_1}
	For $a, b \in V$, if $a$ and $b$ are adjacent in $G$, then $a$ and $b$ alternate in $w$.
\end{lemma}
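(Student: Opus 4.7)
My plan is to analyze the layout of the word $w$ produced by Algorithm \ref{algo-1} and conclude alternation by a short case analysis on whether $a$ and $b$ lie in $C_{\text{long}}$. Since $G$ is a comparability graph, $a$ and $b$ are adjacent exactly when they are comparable in $P_G$, and since $r$ is the greatest element of $P_G$ I may assume $b \prec a$, so $b$ is a strict descendant of $a$ in the Hasse tree. I label the longest chain as $C_{\text{long}} = \{c_0, c_1, \ldots, c_{k-1}\}$ with $c_0 = r$ and $c_i \prec: c_{i-1}$; because every chain of a tree poset lies on a root-to-leaf path and $C_{\text{long}}$ is maximum, $c_{k-1}$ must be a leaf.

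The first main step is to pin down the shape of the final $w$. Although the BFS does not dequeue the $c_i$'s consecutively, every update triggered by a $C_{\text{long}}$-vertex only wraps letters around the current word from the outside, while every update triggered by a non-$C_{\text{long}}$ vertex only inserts letters locally before its two occurrences. Induction on the iterations should therefore yield the nested form
\begin{equation*}
w \;=\; c_{k-1}\; S_{k-1}\; c_{k-2}\; S_{k-2}\; \cdots\; c_1\; S_1\; c_0\; S_1'\; S_2'\; \cdots\; S_{k-1}',
\end{equation*}
where $S_i$ (respectively $S_i'$) is the concatenation, in BFS order (respectively reverse order), of the expanded ``first-copy'' (respectively ``second-copy'') arrangements of the subtrees $T_u$ rooted at the non-$C_{\text{long}}$ children $u$ of $c_{i-1}$. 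A second induction on subtree height should then identify the first-copy arrangement of such a $T_u$ with the postorder of $T_u$ (children taken in BFS order) and the second-copy arrangement with the same postorder but with children visited in reverse. In particular, in each such arrangement every descendant of a node precedes that node, and both occurrences of any non-$C_{\text{long}}$ vertex $v$ whose deepest $C_{\text{long}}$-ancestor is $c_i$ live inside $S_{i+1}$ on the left of $r$ and inside $S_{i+1}'$ on the right of $r$.

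With the structure in hand, I would finish by a three-case analysis. If $a, b \in C_{\text{long}}$ each letter appears only once, so the length-two subword trivially alternates. If $a = c_j \in C_{\text{long}}$ and $b \notin C_{\text{long}}$, then $b$ must be a descendant of $a$ (any non-$C_{\text{long}}$ ancestor of $b$ would force $a$ off the unique root-to-leaf path $C_{\text{long}}$), so $j \le i$ with $c_i$ the $C_{\text{long}}$-anchor of $b$; the structural description then places the first copy of $b$ strictly to the left of $c_j$ (inside $S_{i+1}$, hence to the left of $c_i$) and the second copy of $b$ strictly to the right of $c_j$ (inside $S_{i+1}'$, hence to the right of $c_0 = r$), yielding the alternating subword $b\,c_j\,b$. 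If both $a, b \notin C_{\text{long}}$, then $b$ being a descendant of $a$ confines both vertices to the same subtree $T_u$ for some non-$C_{\text{long}}$ child $u$ of their common $C_{\text{long}}$-anchor; both the postorder layout in $S_{i+1}$ and the reversed-children postorder in $S_{i+1}'$ place $b$ before $a$, so the full subword is $b\,a\,b\,a$ and alternates.

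The main obstacle I expect is the clean formulation and proof of the structural description, in particular the invariant that the descendants of each non-$C_{\text{long}}$ vertex $v$ remain confined to the immediate left of each of its two occurrences so that the (possibly reversed) postorder layout survives all later insertions triggered by other non-$C_{\text{long}}$ vertices. Once that invariant is secured, the three-case check above is immediate from the relative positions of $r$ and the $c_i$'s within $w$.
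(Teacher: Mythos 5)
Your proposal is correct, but it takes a genuinely different route from the paper's proof. The paper argues locally: fixing the covering chain $b = a_s \prec: a_{s-1} \prec: \cdots \prec: a_0 = a$, it inducts along this chain and tracks the two-letter projection $w_{\{a, a_t\}}$ through the algorithm's replacement steps, splitting into the cases $a \notin C_{\text{long}}$ (yielding $w_{\{a,a_t\}} = a_t a a_t a$) and $a \in C_{\text{long}}$ (yielding $a_t a a_t$ from the first index at which the chain leaves $C_{\text{long}}$). You instead prove a global structural description of the finished word --- the nested form $c_{k-1} S_{k-1} \cdots c_1 S_1 c_0 S_1' \cdots S_{k-1}'$ with postorder (respectively reversed-children postorder) segments --- and then read off alternation from relative positions. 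Your invariants all check out: the updates triggered by $C_{\text{long}}$-vertices do wrap the entire current word, the other updates are purely local insertions before the two occurrences of the dequeued vertex, $c_i$ is always the first vertex of its BFS level to be dequeued (so the wrapping happens before any same-level local insertions), and every non-$C_{\text{long}}$ vertex keeps exactly one occurrence on each side of $r$; hence the two deferred inductions would go through, and the three-case conclusion ($ba$, $bab$, $baba$) is immediate. The trade-off is that your structural lemma costs more up front but pays for itself: the same description of $w$ directly yields Lemma \ref{cor_2} (the non-adjacency direction), where the paper must repeat a similar chain-by-chain induction over several subcases, while the paper's per-pair argument is lighter and never needs to formalize the full shape of $w$.
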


\begin{proof}
	Without loss of generality, suppose $b \prec a$ in $P_G$. Let $b = a_{s} \prec: a_{s-1} \prec: \cdots \prec:a_0 = a$ be the path from $a$ to $b$ in $P_G$. As the root $r$ is the greatest element of $P_G$,  according to the construction of $w$, $a$ is visited first then $a_1, a_2, \ldots, a_{s-1}$ and lastly $b$ is visited.

	\begin{itemize}
		\item Case-1: Suppose $a \notin C_{\text{long}}$. Then  $a_i \notin C_{\text{long}}$ for all $0 \le i \le s$ and consequently all of them occur twice in $w$. For $1 \le t \le s$, using induction on $t$, we show that $w_{\{a, a_t\}} = a_taa_ta$. Hence, in particular for $t = s$, $w_{\{a, b\}} = baba$ so that $a$ and $b$ alternate in $w$.
		
		\qquad For $t = 1$, since $a_1$ is a child of $a$,  as per Step 11 of Algorithm \ref{algo-1}, we have $a_1aa_1a \ll w$ and both $a$ and $a_1$ appear exactly twice in $w$. Hence, $w_{\{a, a_1\}} = a_1aa_1a$. For $t = p$ suppose $w_{\{a, a_{p}\}} = a_{p}aa_{p}a$. For $t = p+1$, since $a_{p+1}$ is a child of $a_p$, both the occurrences of $a_p$ in $w$ are replaced by a word containing $a_{p+1}a_p$ as a subword. Hence, $a_{p+1}a_{p}aa_{p+1}a_{p}a \ll w$ so that $w_{\{a, a_{p+1}\}} = a_{p+1}aa_{p+1}a$.

		\item Case-2: Suppose $a \in C_{\text{long}}$. If $b$ is also in $C_{\text{long}}$, then both $a$ and $b$ occur exactly once in $w$ so that they alternate in $w$. 		
		If $b \notin C_{\text{long}}$, then let $m$ be the smallest possible index with $1 \le m \le s$ such that $a_m \notin C_{\text{long}}$. For $m \le t \le s$, using induction on $t$, we show that $w_{\{a, a_t\}} = a_taa_t$. Hence, for $t = s$, we have $a$ and $b$ alternate in $w$. 
		
		\qquad Since $a_{m-1} \in C_{\text{long}}$, note that $a_{m-1}$ appears exactly once in $w$. If $m = 1$, clearly $a_maa_m \ll w$. For $m > 1$, note that $w_{\{a, a_{m-1}\}} = a_{m-1}a$. Since $a_{m}$ is a child of $a_{m-1}$, in view of Step 7 of Algorithm \ref{algo-1}, $a_m$ appears twice in $w$ such that $a_ma_{m-1}aa_m \ll w$. Thus, for $t = m$, $w_{\{a, a_m\}} = a_maa_m$. For $t = m+p$ suppose $w_{\{a, a_{m+p}\}} = a_{m+p}aa_{m+p}$. For $t = m+p+1$, since $a_{m+p+1}$ is a child of $a_{m+p}$, both the occurrences of $a_{m+p}$ in $w$ are replaced by a word containing $a_{m+p+1}a_{m+p}$ as a subword. Hence, $a_{m+p+1}a_{m+p}aa_{m+p+1}a_{m+p} \ll w$ so that $w_{\{a, a_{m+p+1}\}} = a_{m+p+1}aa_{m+p+1}$. 
	\end{itemize}
	Hence, in any case, $a$ and $b$ alternate in $w$.
 \qed
\end{proof}

\begin{lemma}\label{cor_2}
	For $a, b \in V$, if $a$ and $b$ are not adjacent in $G$, then they do not alternate in $w$.
\end{lemma}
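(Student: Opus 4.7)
The plan is to prove the contrapositive: if $a$ and $b$ are non-adjacent in $G$, equivalently incomparable in the poset $P_G$ (since $G$ is a comparability graph), then $w_{\{a,b\}}$ is not alternating. Viewing the Hasse diagram of $P_G$ as a tree rooted at $r$, I first take the least common ancestor $c$ of $a$ and $b$, and pick children $c_p$ and $c_q$ of $c$ (with $p<q$ in the order in which they are listed by Algorithm~\ref{algo-1}) such that $a \preceq c_p$ and $b \preceq c_q$. Incomparability forces $c$ to be strictly above both, so $c_p \neq c_q$.

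The technical core is a structural invariant of the word $w$, proved by induction following the BFS order of the algorithm. Writing $T_x$ for the subtree of $P_G$ rooted at $x$, the invariant asserts that the letters of $T_x$ form a single contiguous factor $W_x$ of $w$ when $x \in C_{\text{long}}$, and two disjoint contiguous factors $W^{(1)}_x$, $W^{(2)}_x$ (each ending in the corresponding occurrence of $x$) when $x \notin C_{\text{long}}$. Moreover, if $x_1,\dots,x_t$ are the children of $x$ in the Hasse diagram (with $x_1 \in C_{\text{long}}$ whenever $x \in C_{\text{long}}$), then
\[
W_x \;=\; W_{x_1}\,W^{(1)}_{x_2}\cdots W^{(1)}_{x_t}\,x\,W^{(2)}_{x_t}\cdots W^{(2)}_{x_2}
\]
in the first case, while
\[
W^{(1)}_x \;=\; W^{(1)}_{x_1}\cdots W^{(1)}_{x_t}\,x, \qquad W^{(2)}_x \;=\; W^{(2)}_{x_t}\cdots W^{(2)}_{x_1}\,x
\]
in the second, with the convention $W^{(1)}_{x_k} = W^{(2)}_{x_k} := W_{x_k}$ for $x_k \in C_{\text{long}}$. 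The inductive step is a direct unfolding of Steps~8 and~11 of Algorithm~\ref{algo-1}, combined with the observation (already implicit in the paper, since $C_{\text{long}}$ is a longest chain through $r$) that no descendant of a vertex outside $C_{\text{long}}$ can itself lie in $C_{\text{long}}$.

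Once the invariant is available, every occurrence of $a$ is confined to the block(s) corresponding to $T_{c_p}$ and every occurrence of $b$ to those corresponding to $T_{c_q}$, so $w_{\{a,b\}}$ is determined entirely by the relative placement of the $T_{c_p}$- and $T_{c_q}$-blocks inside the parent pattern at $c$. A short case split on whether (i) $c \in C_{\text{long}}$ with $p=1$, (ii) $c \in C_{\text{long}}$ with $p \ge 2$, or (iii) $c \notin C_{\text{long}}$ then shows that $w_{\{a,b\}}$ (after possibly swapping $a$ and $b$) takes the form $abb$ or $aabb$ in case (i) and $abba$ in cases (ii) and (iii); each contains two equal consecutive letters and is therefore not alternating. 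The main obstacle is the bookkeeping needed to justify the structural invariant cleanly: one must verify that every replacement performed by the algorithm preserves the contiguity of the existing subtree blocks and respects the prescribed placement of $x$ at their boundary. Once this is in hand, the concluding case analysis is mechanical.
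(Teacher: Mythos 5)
Your overall strategy (reduce to the least upper bound $c$ of $a$ and $b$, then read off $w_{\{a,b\}}$ from the placement of the two subtrees below $c$) matches the paper's, but the structural invariant you build it on is false, and this is a genuine gap. You claim that for $x \in C_{\text{long}}$ the letters of the subtree $T_x$ form a single contiguous factor $W_x$ of $w$. This fails because Step 8 of Algorithm \ref{algo-1} does not insert the children of a chain vertex locally at the position of that vertex: it wraps them around the \emph{entire} current word ($w \leftarrow a_1a_2\cdots a_t\,w\,a_t\cdots a_2$). Concretely, take $r$ with children $c_1 \in C_{\text{long}}$ and $x$, where $c_1$ has children $c_2 \in C_{\text{long}}$ and $y$. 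The algorithm produces $w = c_2\,y\,c_1\,x\,r\,x\,y$, in which the letters of $T_{c_1} = \{c_1, c_2, y\}$ occupy positions $1,2,3,7$ --- the letters $x, r \notin T_{c_1}$ are sandwiched inside. Your recursive formula would instead yield $c_2\,y\,c_1\,y\,x\,r\,x$, which is not the algorithm's output and does not even represent $G$ (there $w_{\{y,r\}} = yyr$, yet $y$ and $r$ are adjacent). The error propagates into your concluding case split: in case (i) the pattern need not be $abb$ or $aabb$; e.g., with $a$ a non-chain descendant of the chain-child of $c$ and $b$ in another subtree, the algorithm gives $w_{\{a,b\}} = abba$ (this is exactly the paper's Case~2.2).

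The invariant is salvageable only for $x \notin C_{\text{long}}$ (two contiguous blocks, each ending in an occurrence of $x$ --- this part of your argument is fine, since Step 11 is a local replacement and no descendant of a non-chain vertex is on the chain). For chain vertices you would instead need to record that the non-chain subtrees hanging off $c_j$ contribute a first-occurrence block immediately to the left of $c_j$ and a mirrored second-occurrence block at the far right end of the word, so that the whole word has the nested shape of Remark \ref{structure_w}. The paper avoids this global bookkeeping entirely: it fixes the covering paths $a = a_s \prec: \cdots \prec: a_1 \prec: c$ and $b = b_{s'} \prec: \cdots \prec: b_1 \prec: c$ and inducts directly along them, first computing $w_{\{a_t, b_1\}}$ for increasing $t$ and then $w_{\{a, b_t\}}$, with a case split on which of $a, b, a_1, b_1$ lie in $C_{\text{long}}$. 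You would need either to adopt that local induction or to correct the invariant before your case analysis can be justified.
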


\begin{proof}
  Note that $a$ and $b$ are incomparable elements in $P_G$. As $P_G$ is a rooted tree with $r$ as the greatest element, there exists $c$ in $P_G$ such that $c$ is the least upper bound of $\{a, b\}$.  Let $a = a_s \prec: a_{s-1} \prec: \cdots \prec: a_{1} \prec: c$ be the path from $c$ to $a$, and $b = b_{s'} \prec: b_{s'-1} \prec: \cdots \prec: b_{1} \prec: c$ be the path from $c$ to $b$ in $P_G$. Hence, $c$ is visited first and then $a_i$, $b_j$ are visited in Algorithm \ref{algo-1}. Note that $a_1, b_1$ are children of $c$ and let $a_1$ is visited before $b_1$ in the algorithm. 
	
	\begin{itemize}
		\item Case-1: One of $a$ and $b$ is in $C_{\text{long}}$ but the other is not; say, $a \in C_{\text{long}}$ and $b \notin C_{\text{long}}$. Then $c \in C_{\text{long}}$, $a_i \in C_{\text{long}}$, for all $1 \le i \le s$, and $b_j \notin C_{\text{long}}$, for all $1 \le j \le s'$. For $1 \le t \le s$, using induction on $t$, we first show that $w_{\{a_t, b_1\}} = a_tb_1b_1$.
		
		\qquad For $t = 1$, since $a_1$, $b_1$ are children of $c$ and $a_1 \in C_{\text{long}}$, as per the algorithm, we have $a_1b_1cb_1 \ll w$. Hence, $w_{\{a_1, b_1\}} = a_1b_1b_1$. For $t = p$, suppose $w_{\{a_{p}, b_1\}} = a_{p}b_1b_1$. For $t = p+1$, since $a_{p+1}$ is a child of $a_{p}$ and both $a_{p}$ and $a_{p+1} \in C_{\text{long}}$, as per the algorithm, we have $a_{p+1}a_{p}b_1b_1 \ll w$ so that $w_{\{a_{p+1}, b_1\}} = a_{p+1}b_1b_1$. Hence, in particular for $t = s$, we have $w_{\{a, b_1\}} = ab_1b_1$. 
		
		\qquad Now for $1 \le t \le s'$, using induction on $t$, we show that $w_{\{a, b_t\}} = ab_tb_t$. 		
		For $t = 1$, we have $w_{\{a, b_1\}} = ab_1b_1$. For $t = p$, suppose $w_{\{a, b_{p}\}} = ab_{p}b_{p}$. For $t = p+1$, since $b_{p+1}$ is a child of $b_{p}$, both the occurrences of $b_{p}$ in $w$ are replaced by a word containing $b_{p+1}b_{p}$ as a subword. Thus, we have $ab_{p+1}b_{p}b_{p+1}b_{p} \ll w$ so that $w_{\{a, b_{p+1}\}} = ab_{p+1}b_{p+1}$. Hence, in particular for $t = s'$, we have $w_{\{a, b\}} = abb$ so that $a$ and $b$ do not alternate in $w$. 
		
		\item Case-2: Both $a$ and $b$ are not in $C_{\text{long}}$. Then we have the following cases.
		\item Case-2.1: Suppose $a_1 \notin C_{\text{long}}$ and $b_1 \notin C_{\text{long}}$. Then note that $a_i \notin C_{\text{long}}$ for all $1 \le i \le s$ and $b_j \notin C_{\text{long}}$ for all $1 \le j \le s'$. 
		
		\qquad For $1 \le t \le s$, using induction on $t$, we first show that $w_{\{a_t, b_1\}} = a_tb_1b_1a_t$. 
		Note that $a_1$, $b_1$ are children of $c$ and both $a_1$ and $b_1$ are not in $C_{\text{long}}$. Hence, if $c \in C_{\text{long}}$, as per the algorithm, we have $a_1b_1cb_1a_1 \ll w$ and if $c \notin C_{\text{long}}$, as per the algorithm, we have $a_1b_1cb_1a_1c \ll w$. Hence, in any case, for $t=1$, we have $w_{\{a_1, b_1\}} = a_1b_1b_1a_1$. For $t = p$, suppose $w_{\{a_{p}, b_1\}} = a_{p}b_1b_1a_{p}$. For $t = p+1$, since $a_{p+1}$ is a child of $a_{p}$, both the occurrences of $a_{p}$ in $w$ are replaced by a word containing $a_{p+1}a_{p}$ as a subword. Thus, we have $a_{p+1}a_{p}b_1b_1a_{p+1}a_{p} \ll w$ so that $w_{\{a_{p+1}, b_1\}} = a_{p+1}b_1b_1a_{p+1}$. Hence, in particular for $t = s$, we have $w_{\{a, b_1\}} = ab_1b_1a$.

		\qquad Now for $1 \le t \le s'$, using induction on $t$, we show that $w_{\{a, b_t\}} = ab_tb_ta$. 	For $t = 1$, we have $w_{\{a, b_1\}} = ab_1b_1a$. For $t = p$, suppose $w_{\{a, b_{p}\}} = ab_{p}b_{p}a$. For $t = p+1$, since $b_{p+1}$ is a child of $b_{p}$ and both the occurrences of $b_{p}$ in $w$ are replaced by a word containing $b_{p+1}b_{p}$ as a subword, we have $ab_{p+1}b_{p}b_{p+1}b_{p}a \ll w$ so that $w_{\{a, b_{p+1}\}} = ab_{p+1}b_{p+1}a$. Hence, in particular for $t = s'$, we have $w_{\{a, b\}} = abba$ so that $a$ and $b$ do not alternate in $w$. 
		
		\item Case-2.2: One of $a_1$ and $b_1$ is in $C_{\text{long}}$ but not the other; say, $a_1 \in C_{\text{long}}$ and $b_1 \notin C_{\text{long}}$. Then $c \in C_{\text{long}}$ and $b_j \notin C_{\text{long}}$ for all $1 \le j \le s'$. Let $m (>1)$ be the smallest positive interger such that $a_m \notin C_{\text{long}}$. Therefore $a_i \notin C_{\text{long}}$ for all $m \le i \le s$. 
		
		\qquad  For $m \le t \le s$, using induction on $t$, we show that $w_{\{a_t, b\}} = a_tbba_t$. Hence, in particular for $t=s$, we have $w_{\{a, b\}} = abba$. 		
		As shown in Case-1, we can have $w_{\{a_{m-1}, b\}} = a_{m-1}bb$. For $t = m$, since $a_m$ is a child of $a_{m-1}$ but $a_m \notin C_{\text{long}}$, as per the algorithm, we have $a_ma_{m-1}bba_m \ll w$. Thus $w_{\{a_{m}, b\}} = a_{m}bba_m$.  For $t = m+p$, suppose $w_{\{a_{m+p}, b\}} = a_{m+p}bba_{m+p}$. For $t = m+p+1$, as $a_{m+p+1}$ is a child of $a_{m+p}$ and both the occurrences of $a_{m+p}$ in $w$ are replaced by a word containing $a_{m+p+1}a_{m+p}$ as a subword, we have $a_{m+p+1}a_{m+p}bba_{m+p+1}a_{m+p} \ll w$. Thus $w_{\{a_{m+p+1}, b\}} = a_{m+p+1}bba_{m+p+1}$. Hence in particular for $t = s$, we have $w_{\{a, b\}} = abba$.
\end{itemize}
Hence in any case $a$ and $b$ do not alternate in $w$.
	 \qed
\end{proof}

We now conclude in the following theorem that the word $w$ produced by Algorithm \ref{algo-1} on an arborescence $G$ is a minimum-word-representant of $G$.

\begin{theorem}
	If $G$ is an arborescence on $n$ vertices with clique number $k$, then $\ell(G) = 2n - k$.
\end{theorem}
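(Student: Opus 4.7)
The plan is to prove the equality by establishing matching upper and lower bounds.

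For the upper bound $\ell(G) \le 2n - k$, I would invoke the machinery already developed in this subsection. Algorithm \ref{algo-1} produces a word $w$ from the Hasse diagram of $P_G$ and a maximum chain $C_{\text{long}}$. By Remark \ref{len_w}, this word has length exactly $2n - k$, since the $k$ vertices of $C_{\text{long}}$ contribute one occurrence each while the other $n - k$ vertices contribute two occurrences each. By Lemma \ref{cor_1}, every edge of $G$ becomes an alternating pair in $w$, and by Lemma \ref{cor_2}, every non-edge of $G$ fails to alternate in $w$. Hence $w$ is a word-representant of $G$, giving $\ell(G) \le |w| = 2n - k$.

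For the matching lower bound $\ell(G) \ge 2n - k$, I would argue via a simple but crucial observation about letters of multiplicity one. Let $u$ be any word-representant of $G$, and let $S \subseteq V$ be the set of vertices whose letter occurs exactly once in $u$. If $a, b \in S$, then $u_{\{a, b\}}$ is either $ab$ or $ba$, which is an alternating word of length two; hence $a$ and $b$ must be adjacent in $G$. Therefore $S$ induces a clique in $G$, so $|S| \le k$. Since $G$ is connected on at least two vertices and $u$ represents $G$, every vertex must appear at least once in $u$, so the remaining $n - |S|$ vertices contribute at least two occurrences each. Consequently,
\[
|u| \ge |S| + 2(n - |S|) = 2n - |S| \ge 2n - k.
\]
Taking the minimum over all word-representants $u$ of $G$ yields $\ell(G) \ge 2n - k$.

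Combining the two bounds gives $\ell(G) = 2n - k$, as required. The only subtle point is the lower-bound argument: the key conceptual step is recognizing that letters appearing exactly once in any representant must be pairwise alternating and hence pairwise adjacent, forcing the multiplicity-one letters to form a clique. Everything else is either counting or a direct appeal to the earlier lemmas; the concrete algorithmic construction $w$ already realizes the extremal configuration in which the clique of multiplicity-one letters is precisely $C_{\text{long}}$.
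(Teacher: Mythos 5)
Your proposal is correct and follows essentially the same route as the paper: the upper bound is obtained exactly as in the paper from Algorithm \ref{algo-1}, Remark \ref{len_w}, and Lemmas \ref{cor_1} and \ref{cor_2}. The only difference is that you prove the lower bound $2n-k \le \ell(G)$ directly (via the observation that letters of multiplicity one must pairwise alternate and hence form a clique), whereas the paper simply cites this known fact from \cite[Theorem 2.9]{Eshwar_2024}; your inline argument is the standard proof of that cited result and is correct.
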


\begin{proof}
	Let $w$ be the word obtained by Algorithm \ref{algo-1} on an arborescence $G$. From lemmas \ref{cor_1} and \ref{cor_2}, it is evident that $w$ represents $G$.  In view of Remark \ref{len_w}, the length of the word $w$ is $2n - k$ so that $\ell(G) \le 2n - k$. Further, as the size of a maximum clique in $G$ is $k$ we have $2n - k \le \ell(G)$ (cf. \cite[Theorem 2.9]{Eshwar_2024}). Hence, $w$ is a minimum-word-representant of $G$ and $\ell(G) = 2n - k$. \qed 
\end{proof}

We now prove a more general theorem for the minimum-word-representants of  double-arborescences.

\begin{theorem}\label{min_word_doub_arbore}
	Suppose $G$ is a double-arborescence on $n$ vertices with clique number $k$. Then $\ell(G) = 2n - k$.
\end{theorem}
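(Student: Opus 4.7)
The plan is to reduce the problem to the arborescence case already treated. If $G$ is itself an arborescence (so $r$ is a source or sink in the treelike orientation $D$), the conclusion follows from the previous theorem. Otherwise $G$ is a strict-double-arborescence. Since $r$ is all-adjacent, put $A_1 = \{a \in V : \overrightarrow{ra} \text{ in } D\}$ and $A_2 = \{a \in V : \overrightarrow{ar} \text{ in } D\}$, so $V = \{r\} \cup A_1 \cup A_2$. Let $G_i = G[\{r\} \cup A_i]$ for $i = 1, 2$; by transitivity each $G_i$ is an arborescence with root $r$ (a source in $G_1$, a sink in $G_2$). Apply Algorithm \ref{algo-1} to each $G_i$, treating $r$ as the greatest element of $P_{G_i}$ (reversing the orientation on $G_1$ if needed, which yields an isomorphic arborescence), and let $w^{(i)} = w_1^{(i)}\, r\, w_2^{(i)}$ be the resulting word as in Remark \ref{structure_w}, of length $2|V(G_i)| - k_i$, where $k_i$ is the clique number of $G_i$. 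Define
\[
w \;=\; (w_2^{(2)})^{\mathrm{rev}}\; w_1^{(1)}\; r\; (w_1^{(2)})^{\mathrm{rev}}\; w_2^{(1)},
\]
where $u^{\mathrm{rev}}$ denotes the reverse of a word $u$, with the single occurrence of $r$ shared between the two ingredient words. Because the Hasse diagram of $P_G$ is a tree with $r$ as an internal node, every maximal chain of $P_G$ passes through $r$ and is obtained by gluing a maximal chain of $P_{G_1}$ with one of $P_{G_2}$ at $r$; hence $k = k_1 + k_2 - 1$, and a short computation yields $|w| = |w^{(1)}| + |w^{(2)}| - 1 = 2n - k$.

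To verify that $w$ represents $G$, I would check three regimes of pairs. For a pair inside $\{r\} \cup A_1$, the relevant letters appear only in the inner segments $w_1^{(1)}, r, w_2^{(1)}$ of $w$, so the subword of $w$ on that pair coincides with the subword of $w^{(1)}$; correctness of Algorithm \ref{algo-1} on $G_1$ then matches alternation with adjacency in $G$. For a pair inside $\{r\} \cup A_2$, the letters occur only in $(w_2^{(2)})^{\mathrm{rev}}, r, (w_1^{(2)})^{\mathrm{rev}}$, whose concatenation equals $(w^{(2)})^{\mathrm{rev}}$; since reversal preserves alternation, correctness of Algorithm \ref{algo-1} on $G_2$ suffices. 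For a cross-pair $(a, b) \in A_1 \times A_2$, transitivity of the orientation ($\overrightarrow{br}$ and $\overrightarrow{ra}$ force $\overrightarrow{ba}$) implies $a$ and $b$ are adjacent in $G$, so alternation must hold. The occurrences of $b$ lie in the outer segments $(w_2^{(2)})^{\mathrm{rev}}$ and/or $(w_1^{(2)})^{\mathrm{rev}}$ (strictly before and after $r$), while those of $a$ lie in the inner segments $w_1^{(1)}$ and/or $w_2^{(1)}$ (before and after $r$); depending on whether $a$ and $b$ lie in the longest chains of $G_1$ and $G_2$ respectively, the subword on $\{a, b\}$ is one of $baba$, $bab$, $aba$, or $ab$, each alternating.

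The matching lower bound $\ell(G) \ge 2n - k$ is supplied by \cite[Theorem 2.9]{Eshwar_2024}, giving $\ell(G) = 2n - k$. The main obstacle is the cross-pair verification: one must track, across the four sub-cases for longest-chain membership, exactly which outer segment each occurrence of $b$ lands in and which inner segment each occurrence of $a$ lands in, and confirm alternation in every case. A secondary subtlety is establishing $k = k_1 + k_2 - 1$, which relies on the tree shape of the Hasse diagram of $P_G$ to force every maximal chain to pass through $r$.
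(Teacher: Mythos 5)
Your proposal is correct and follows essentially the same route as the paper: split the strict-double-arborescence at the all-adjacent root into the ``up'' and ``down'' arborescences, run Algorithm \ref{algo-1} on each, reverse one of the two outputs, and interleave the halves around the shared single occurrence of $r$, exactly as in the paper's word $w = u^R_2v_1ru^R_1v_2$; your cross-pair case analysis (subwords $baba$, $bab$, $aba$, $ab$) and the identity $k = k_1 + k_2 - 1$ (every maximal chain passes through $r$) match the paper's argument, as does the appeal to \cite[Theorem 2.9]{Eshwar_2024} for the lower bound. No substantive differences.
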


\begin{proof}
	Let $r$ be a root of a double-arborescence $G = (V, E)$ and $P_G$ be the induced poset of $G$ with respect to its double-arborescence orientation. If $C_{\text{long}}$ is a longest chain in $P_G$, then note that the size of $C_{\text{long}}$ is $k$.  
	
    Let $A = \{a \in V \mid a \prec r \ \text{in} \ P_G\}$ and $B = \{a \in V \mid  r \prec a \ \text{in} \ P_G\}$ so that $A \cap B = \{ r \}$ and $A \cup B = V$. Let $P_A$ and $P_B$ be the subposets of $P_G$ on the vertex sets $A$ and $B$, respectively. Then the Hasse diagrams of both $P_A$ and $P_B$ are rooted trees with root $r$. Note that $r$ is the greatest element of $P_A$ and it is the least element of $P_B$. Let $G_A$ and $G_B$ be the induced subgraphs of $G$ on the vertex sets $A$ and $B$, respectively. It can be observed that both $G_A$ and $G_B$ are arborescences.
    
   Let $C^A_{\text{long}} = C_{\text{long}} \cap A$ and $C^B_{\text{long}} = C_{\text{long}} \cap B$ so that $C^A_{\text{long}}$ and $C^B_{\text{long}}$ are longest chains in $P_A$ and $P_B$, respectively. We run Algorithm \ref{algo-1} separately on $P_A$ and $P_B$ by considering the respective longest chains $C^A_{\text{long}}$ and $C^B_{\text{long}}$. Let $u$ and $v$ be minimum-word-representants of $G_A$ and $G_B$, respectively, obtained from Algorithm \ref{algo-1}. Suppose that $u = u_1ru_2$ and $v = v_1rv_2$. Note that the reversal of $u$,  $u^R = u^R_2ru^R_1$ also represents $G_A$. We show that the word $w = u^R_2v_1ru^R_1v_2$ represents the graph $G$.
   
   First note that $w_{A} = u^R_2ru^R_1$. Hence any two vertices $a$ and $a'$ of $G_A$ are adjacent in $G$ if and only if $a$ and $a'$ alternate in $w$. Also $w_B = v_1rv_2$ so that any two vertices $b$ and $b'$ of $G_B$ are adjacent in $G$ if and only if they alternate in $w$. We now show that $r$ alternates with any $a \in V \setminus \{r\}$ in $w$ in the following cases:
   
   \begin{itemize}
   	\item $a \in A \setminus \{r\}$ such that $a \in C^A_{\text{long}}$: Then $a$ occurs exactly once in $u^R$ as well as in $w$. Moreover, since $r$ occurs exactly once in $w$, we have $a$ and $r$ alternate in $w$. Similarly, if $a \in B \setminus \{r\}$ such that $a \in C^B_{\text{long}}$, then both $a$ and $r$ occur exactly once in $w$ so that they alternate in $w$.

   	\item $a \in A \setminus \{r\}$ such that $a \notin C^A_{\text{long}}$: Then $a$ occurs exactly twice in $u^R$ as well as in $w$. Further, in view of Remark \ref{structure_w}, we have $w_{\{a, r\}} = ara$. Similarly, when $a \in B \setminus \{r\}$ but $a \notin C^B_{\text{long}}$, then $w_{\{a, r\}} = ara$. 
    \end{itemize}
    
 Finally, let $a \in A \setminus \{r\}$ and  $b \in B \setminus \{r\}$. In view of Remark \ref{structure_w}, the positions of $a$'s in $u_1$ and $u_2$ and the positions of $b$'s in $v_1$ and $v_2$ are evident as shown in the following cases so that $a$ and $b$ alternate in $w$. 
  \begin{itemize}
  	\item If $a \in C^A_{\text{long}}$ and  $b \in C^B_{\text{long}}$, then $w_{\{a, b\}} = ba$.  
  	\item If $a \in C^A_{\text{long}}$ and $b \notin C^B_{\text{long}}$, then $w_{\{a, b\}} = bab$.
  	\item If $a \notin C^A_{\text{long}}$ and $b \in C^B_{\text{long}}$, then $w_{\{a, b\}} = aba$.
  	\item If $a \notin C^A_{\text{long}}$ and $b \notin C^B_{\text{long}}$, then $w_{\{a, b\}} = abab$.
  \end{itemize}
 Hence, the word $w$ represents the graph $G$. Note that the elements of $C_{\text{long}}$ appear exactly once in $w$ while the rest of the elements of $V$ appear twice in $w$ so that the length of the word $w$ is $2n-k$. Further, since the size of a maximum clique in $G$ is $k$,  we have $\ell(G) = 2n - k$. \qed
\end{proof}

\section{Conclusion}

The characterizations of (double-)arborescences given in this work may be useful in addressing various enumeration related problems of double-arborescences. In \cite{Enumeration_arborescenes}, the arborescences were enumerated both exactly and asymptotically. However, no such results are available for double-arborescences.  Considering the characterizations based on split decomposition for certain subclasses of distance-hereditary graphs, some grammars enumerating the subclasses were produced in \cite{MR3907778}. As our characterizations given in Corollary \ref{ch_double_arb} and Corollary \ref{main_th} for double-arborescences and arborescences are using the split-decomposition trees, taking a clue from the work in \cite{MR3907778}, one may produce grammars to enumerate the (double-)arborescences. Moreover, the recognization algorithm for treelike comparability graphs given in \cite[Theorem 5]{cornelsen2009treelike} may be  customized  to both double-arborescences and arborescences,  using the characterizations given in this work. 

Considering the open problem given in \cite{Eshwar_2024}, we established that the length of a minimum-word-representant of a double-arborescence on $n$ vertices with clique number $k$ is $2n-k$. Based on our observations, we conjecture that the length of minimum-word-representants of a path of double-arborescences is also $2n-k$. Note that a minimum-word-representant of a double-arborescence or an arborescence need not be unique. In fact, the minimum-word-representant produced by Algorithm \ref{algo-1} on a double-arborescence may differ depending on the maximum size clique chosen. It is also interesting to enumerate all minimum-word-representants of double-arborescences and arborescences, in line with the work in \cite{Marisa_2020}.

\end{document}